\documentclass[12pt]{article}
\usepackage[utf8]{inputenc}
\usepackage{amsthm}
\usepackage{amsmath}
\usepackage{bbm}
\usepackage{amsfonts}
\usepackage{amssymb}
\usepackage[left=2cm,right=2cm,top=2cm,bottom=2cm]{geometry}

\usepackage{tabto}
\TabPositions{2 cm, 4 cm, 6 cm, 8 cm}

\usepackage{tikz-cd}

\usepackage{tikz}
\usetikzlibrary{arrows, automata, positioning}

\usepackage{tocloft}
\setlength\cftbeforetoctitleskip{5cm}
\setlength\cftaftertoctitleskip{2cm}

\usepackage{appendix}

\usepackage{enumerate}

\usepackage{hyperref}
 
\newtheorem{theorem}{Theorem}[section]
\newtheorem{corollary}[theorem]{Corollary}
\newtheorem{lemma}[theorem]{Lemma}
\newtheorem{claim}[theorem]{Claim}
\newtheorem{proposition}[theorem]{Proposition}

\theoremstyle{definition}
\newtheorem{definition}[theorem]{Definition}

\newtheorem*{definition*}{Definition}

\newtheorem*{lemma*}{Lemma}
\newtheorem*{proposition*}{Proposition}
\newtheorem*{theorem*}{Theorem}
\newtheorem*{corollary*}{Corollary}

\theoremstyle{definition}

\newtheorem{question}[theorem]{Question}

%MATH OPERATORS

%Functors

\newcommand{\im}{\operatorname{im}}
\newcommand{\Hom}{\operatorname{Hom}}
\newcommand{\Homeo}{\operatorname{Homeo}}

%Cohomology

\newcommand{\HH}{\operatorname{H}}

\newcommand{\CC}{\operatorname{C}}
\newcommand{\ZZ}{\operatorname{Z}}
\newcommand{\BB}{\operatorname{B}}

%Comments

\begin{document}

\title{Finitely generated simple left orderable groups \\ with vanishing second bounded cohomology}
\author{Francesco Fournier-Facio and Yash Lodha}
\date{\today}
\maketitle

\begin{abstract}
We prove that the finitely generated simple left orderable groups constructed by the second author with Hyde have vanishing second bounded cohomology, both with trivial real and trivial integral coefficients. As a consequence, these are the first examples of finitely generated non-indicable left orderable groups with vanishing second bounded cohomology. This answers Question $8$ from the $2018$ ICM proceedings article of Andr\'{e}s Navas.
\end{abstract}

%\fff{Francesco's comments}
%\yl{Yash's comments}

\section{Introduction}

A group $G$ is said to be \emph{left orderable} if it admits a total order which is invariant under left multiplication.
This notion has a beautiful connection with dynamics of group actions on the line: a countable group $G$ is left orderable if and only if it admits a faithful action by orientation preserving homeomorphisms on the real line. Two closely related algebraic notions are the following. A group is \emph{indicable} if it admits a surjection onto $\mathbf{Z}$, and \emph{locally indicable} if every finitely generated subgroup is indicable. Note that for a finitely generated group, local indicability implies indicability.

It is well known that locally indicable groups are left orderable, yet the converse fails. However, the converse holds in certain situations.
A fundamental theorem in this direction is Witte-Morris's Theorem \cite{WM}: amenable left orderable groups are locally indicable. Equivalently, finitely generated amenable left orderable groups are indicable. 

In his $2018$ ICM proceedings article \cite{questions}, Andr\'{e}s Navas uses this theorem as a starting point for a research program consisting of a list of questions about left orderable groups which are reminiscent of the Tits alternative. The general theme is: Are some of the properties weaker than amenability enough to imply indicability of a finitely generated left orderable group?
In this paper, we answer the following question from Navas's problem list:

\begin{question}[{\cite[Question 8]{questions}}]
\label{q}
Does there exist a finitely-generated, non-indicable, left orderable group $G$ such that $\HH^2_b(G; \mathbf{R}) = 0$?
\end{question}

Recall that $\HH^\bullet_b(G; \mathbf{R})$ denotes the \emph{bounded cohomology} of $G$ with trivial real coefficients. It is defined analogously to standard group cohomology, but taking the topological dual of the simplicial resolution instead of the algebraic dual (see Section \ref{s:BC} for more details). This invariant was introduced by Johnson and Trauber in the context of Banach algebras \cite{Johnson}, and has since then become a fundamental tool in several fields, most notably the geometry of manifolds \cite{Gromov}, stable commutator length \cite{Calegari} and rigidity theory \cite{rigidity}. Moreover, it is of interest in the setting of group actions on $1$-manifolds because of the work of Ghys \cite{Ghys}, where the second bounded cohomology group with integral coefficients is related in a strong way to actions on the circle.

By a result due to Trauber and first proven by Johnson \cite{Johnson}, amenable groups have vanishing bounded cohomology with trivial real coefficients in all positive degrees (see also \cite[Theorem 3.6]{BC}). In particular, $\HH^2_b(G; \mathbf{R}) = 0$ for every amenable group $G$, which relates Question \ref{q} to Witte-Morris's Theorem. \\

There are two key difficulties in attempting to resolve Question \ref{q}. 
The first is the hypothesis of finite generation. Indeed, using the construction of Mather \cite{Mather} it is easy to show that every  countable left orderable group embeds in a countable left orderable perfect group which has vanishing bounded cohomology in every positive degree \cite{binatebc}. But such groups are not finitely generatable.

When restricting to finitely generated groups, a further difficulty in Question \ref{q} lies in the requirement that the finitely generated group must be \emph{non-indicable}, which for finitely generated groups is stronger that \emph{non-locally indicable}. In other words, the witness to the failure of local indicability is required to be $G$ itself, and not some finitely generated subgroup thereof. In \cite{our}, the authors demonstrated the existence of continuum many finitely generated, left orderable, non-locally indicable groups $G$ such that $\HH^2_b(G; \mathbf{R}) = 0$. However, the groups constructed there are all indicable, and hence do not answer the above question.\\

In \cite{HydeLodha} the second-named author and Hyde constructed the first family of finitely generated simple left orderable groups. The construction takes as input a so-called \emph{quasi-periodic labelling} $\rho$ and outputs a finitely generated simple left orderable group $G_\rho$ (see Section \ref{s:grho} for more detail). Note that the finitely generated groups $G_{\rho}$ are non-indicable, being simple. Thus our main theorem below shows that the groups $G_{\rho}$ provide a positive answer to Question \ref{q}:

\begin{theorem}
\label{theorem:main}
Let $\rho$ be a quasi-periodic labelling. Then $\HH^2_b(G_{\rho}; \mathbf{R}) = 0$.
\end{theorem}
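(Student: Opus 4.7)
The plan is to exploit the dynamics of $G_\rho$ acting faithfully on $\mathbf{R}$ (as it is left orderable) to verify a \emph{commuting conjugates} type property, and then to invoke general vanishing criteria of the type developed in \cite{our}. Concretely, the hoped-for criterion has the form: if every finitely generated subgroup $H \leq G_\rho$ admits an element $g \in G_\rho$ (or, for the full vanishing of $\HH^2_b$ rather than just the exact part, a sequence $g_n \in G_\rho$ of commuting conjugators) such that $H$ and $gHg^{-1}$ commute elementwise, then $\HH^2_b(G_\rho; \mathbf{R}) = 0$.

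As the first substantive step, I would establish a displacement property for $G_\rho$ acting on $\mathbf{R}$: for any finite set $F \subset G_\rho$, one can find a bounded interval $I$ capturing the relevant dynamics of $F$, together with $g \in G_\rho$ such that $g(I)$ and $I$ are disjoint. Since orientation-preserving homeomorphisms with disjoint supports commute, this yields $\langle F \rangle$ commuting with $g\langle F\rangle g^{-1}$. The quasi-periodic labelling $\rho$ is the natural source of such conjugators: the hypothesis that $\rho$ repeats its local patterns at all scales should provide arbitrarily large "translation-like" elements of $G_\rho$, which can be used to shift a prescribed bounded region far from itself.

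The main obstacle is that the elements of $G_\rho$ need not be compactly supported, so $I$ cannot always be chosen to contain the supports of $F$. I would need to track the behaviour of generators of $G_\rho$ outside a large compact set and show, using the combinatorics of the Hyde--Lodha construction, that the "tail behaviour" of elements of $F$ is controlled enough that a conjugator $g$ arising from a far-away pattern of $\rho$ still commutes with $F$. This is the step where the precise definition of $G_\rho$ and the quasi-periodicity of $\rho$ will be used in a delicate way; I expect a careful normal form or support analysis of elements of $G_\rho$ to be unavoidable.

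Finally I would assemble the vanishing. At the level of homogeneous quasimorphisms, commuting conjugates immediately forces every such quasimorphism to be trivial (they vanish on commuting pairs and are conjugation-invariant), giving $\EH^2_b(G_\rho; \mathbf{R}) = 0$. To upgrade this to $\HH^2_b(G_\rho; \mathbf{R}) = 0$ one needs to control the image of the comparison map as well; I would iterate the construction to produce not a single commuting conjugate but an infinite commuting family $\{g_n H g_n^{-1}\}_{n \in \mathbf{N}}$, and then feed this into the general vanishing machinery of \cite{our}, which was designed precisely for this type of input. Applied with $H$ ranging over finitely generated subgroups of $G_\rho$, this yields the claimed $\HH^2_b(G_\rho; \mathbf{R}) = 0$.
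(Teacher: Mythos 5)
Your strategy hinges on a displacement property that $G_\rho$ simply does not have, and this is not a technicality that more careful bookkeeping can repair. By the characterization of $G_\rho$ as $K_\rho$ (Theorem \ref{characterisation}), the restriction of a nontrivial $f \in G_\rho$ to $[n,n+1]$ is determined by the window $\mathcal{W}([n,n+1],k_f)$ of the labelling, and by quasi-periodicity every window that occurs recurs in every sufficiently long block. Hence $Supp(f)$ meets every sufficiently long interval: supports of nontrivial elements are syndetic in $\mathbf{R}$. There is therefore no bounded interval $I$ ``capturing the relevant dynamics'' of a finite set $F$, no conjugator moving $Supp(f)$ off itself, and no reason for $\langle F\rangle$ and $g\langle F\rangle g^{-1}$ to commute: the ``tail behaviour'' you hope to control far from $I$ is exactly as nontrivial as the behaviour on $I$, by design of the construction. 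Relatedly, the consequence you would extract from commuting conjugates at the level of homogeneous quasimorphisms --- that $G_\rho$ has none --- was already known from \cite{HLNR}; the genuine content of the theorem lies beyond that, and you defer it entirely to unspecified machinery from \cite{our} whose hypotheses (infinite commuting families of conjugates) you have given no mechanism to verify and which, for the reason above, are not available.

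The paper's proof takes a genuinely different route worth contrasting with your plan. Instead of displacing supports, it uses Bouarich's unique homogeneous representative to encode a class as a central extension with a conjugation-equivariant section $\sigma$, proves that every subgroup of $G_\rho$ with a global fixed point is $2$-boundedly acyclic (Proposition \ref{prop:stabbac}, resting on the structural Proposition \ref{Fprime} that two elements fixing a common interval lie in a finite direct sum of copies of $F'$, hence in a subgroup of $\textup{PL}^+([0,1])$ where the results of \cite{our} apply), and then, given $a_1a_2a_3=id$, constructs \emph{special elements} $g_i$ supported on thin syndetic sets $I_{i+1}$ so that each $f_ig_i$ fixes $I_i$ pointwise. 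The cocycle identity is then verified by shuttling $\sigma$ through a chain of subgroups each of which either has a global fixed point or is abelian. If you want to salvage the spirit of your approach, the viable replacement for ``disjoint supports after displacement'' is ``common fixed points after correction,'' which is precisely what the special elements achieve.
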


We wish to emphasise here that our solution to Navas's question is even more striking owing to the fact that the groups $G_{\rho}$ are simple, which is much stronger than being non-indicable.

Note that in \cite{HLNR} it was already shown that $G_\rho$ has no unbounded quasimorphisms, which implies that a natural map $\HH^2_b(G_{\rho}; \mathbf{R}) \to \HH^2(G_{\rho}; \mathbf{R})$ is injective. Theorem \ref{theorem:main} recovers this fact, and proves something strictly stronger, since there are many groups with this property which do admit nontrivial second bounded cohomology classes, such as Thompson's group $T$ \cite{our}. 

In particular, we recover the fact that the groups $G_\rho$ are \emph{left orderable monsters}, meaning that every faithful action on the real line is globally contracting \cite[Corollary 0.3]{HLNR}. \\

We are also able to deduce:

\begin{corollary}
\label{cor:integral}

Let $\rho$ be a quasi-periodic labelling. Then $\HH^2_b(G_{\rho}; \mathbf{Z}) = 0$.
Therefore, every action of $G_{\rho}$ on the circle has a global fixpoint.
\end{corollary}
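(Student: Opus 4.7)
The plan is to handle the two assertions of the corollary in turn: first the bounded cohomology vanishing with integer coefficients, and then the dynamical statement via the bounded Euler class.

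For the cohomological claim, I would invoke the long exact sequence in bounded cohomology associated to the short exact sequence of trivial coefficient modules $0 \to \mathbf{Z} \to \mathbf{R} \to \mathbf{R}/\mathbf{Z} \to 0$. Since $\mathbf{R}/\mathbf{Z}$ is bounded, bounded and ordinary cohomology with those coefficients coincide, and the portion of the sequence we need reads
$$\HH^1_b(G_\rho; \mathbf{R}) \to \HH^1(G_\rho; \mathbf{R}/\mathbf{Z}) \to \HH^2_b(G_\rho; \mathbf{Z}) \to \HH^2_b(G_\rho; \mathbf{R}).$$
The leftmost term vanishes tautologically (bounded homomorphisms to $\mathbf{R}$ are trivial), and the rightmost vanishes by Theorem \ref{theorem:main}. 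The middle term $\HH^1(G_\rho; \mathbf{R}/\mathbf{Z}) = \Hom(G_\rho, \mathbf{R}/\mathbf{Z})$ is zero because $G_\rho$ is an infinite simple group and hence perfect. Exactness then yields $\HH^2_b(G_\rho; \mathbf{Z}) = 0$.

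For the dynamical statement, I would apply the classical theory of the bounded Euler class from the work of Ghys \cite{Ghys}. Any action $\phi: G_\rho \to \Homeo_+(S^1)$ has a bounded Euler class $e_b(\phi) \in \HH^2_b(G_\rho; \mathbf{Z})$, which vanishes by the preceding step, so $\phi$ lifts to an action $\tilde{\phi}: G_\rho \to \Homeo_+(\mathbf{R})$ by homeomorphisms commuting with the unit translation $T: x \mapsto x+1$. If $\tilde\phi$ were nontrivial, then by simplicity of $G_\rho$ it would be faithful, and by the left orderable monster property of $G_\rho$ recalled above it would be globally contracting. But $\mathbf{Z}$-equivariance forces $\tilde\phi(g)([0,1]) = [\tilde\phi(g)(0), \tilde\phi(g)(0)+1]$, an interval of length exactly $1$, ruling out any contraction of $[0,1]$ into a strictly shorter interval. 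Hence $\tilde\phi$ is trivial, so $\phi$ is trivial, and every point of $S^1$ is a global fixpoint.

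I do not expect any serious obstacle in this argument. The main technical subtlety is justifying the long exact sequence above and the identification of bounded and ordinary cohomology for $\mathbf{R}/\mathbf{Z}$ coefficients, which is standard. Everything else is a direct combination of Theorem \ref{theorem:main}, Ghys' correspondence between circle actions and the bounded Euler class, and the monster property inherited from \cite{HLNR}.
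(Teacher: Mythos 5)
Your proof of the first assertion is exactly the paper's: the Gersten long exact sequence associated to $0 \to \mathbf{Z} \to \mathbf{R} \to \mathbf{R}/\mathbf{Z} \to 0$ \cite{gersten}, the identification $\HH^1(G_\rho; \mathbf{R}/\mathbf{Z}) \cong \Hom(G_\rho, \mathbf{R}/\mathbf{Z}) = 0$ from perfectness, and Theorem \ref{theorem:main} killing the real term. For the dynamical assertion your route genuinely differs. The paper simply quotes Ghys's theorem in the form ``vanishing of the bounded integral Euler class implies a global fixed point'' (\cite{Ghys}, see also \cite[Proposition 10.20]{BC}). You instead use only the weaker consequence that the ordinary integral Euler class vanishes, so the action lifts to a homomorphism into the group of homeomorphisms of $\mathbf{R}$ commuting with $x \mapsto x+1$, and then rule out a nontrivial lift by combining simplicity (a nontrivial lift is faithful) with the left orderable monster property from \cite[Corollary 0.3]{HLNR}: a $T$-equivariant homeomorphism sends $[0,1]$ onto an interval of length exactly $1$, so the lifted action cannot be globally contracting. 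This is correct, and in fact proves the stronger statement that every action of $G_\rho$ on the circle is \emph{trivial}, not merely that it has a global fixpoint. The trade-off is that you import an additional external input (the monster property, which the introduction notes is itself recoverable from Theorem \ref{theorem:main}), whereas the paper's direct appeal to Ghys's theorem needs nothing beyond the vanishing of $\HH^2_b(G_\rho; \mathbf{Z})$ just established.
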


The aforementioned result from \cite{HLNR} already implies that every action on the circle which lifts isomorphically to an action on the real line has a global fixpoint. This corollary extends this result to all actions, and in particular implies that \emph{every} action lifts isomorphically to the real line. \\

To prove Theorem \ref{theorem:main}, we will start by applying the techniques developed by the authors in \cite{our} in order to exhibit several subgroups $H \leq G_{\rho}$ satisfying $\HH^2_b(H; \mathbf{R}) = 0$. The promotion of these computations to $G_{\rho}$ is the novelty of our proof. This involves a new construction of elements in the group $G_{\rho}$ and a refinement of some of the structural analysis of $G_{\rho}$ carried out in \cite{HLNR}.
We believe that some of these techniques are widely applicable, and could lead to further vanishing results for the second bounded cohomology of groups of homeomorphisms. \\

Since the first examples emerged in \cite{HydeLodha}, more families of finitely generated left orderable simple groups have been constructed \cite{MBT, HLR}. It is worth pointing out that Theorem \ref{theorem:main} does not hold for all of them. The commutator subgroups $H_n$ of the fast $n$-ring groups $G_n$, for $n \geq 3$ are finitely generated, left orderable and simple \cite{HLR}. By construction they act on the circle without fixing a point, and since they are also perfect, the action produces a non-zero bounded real Euler class $e \in \HH^2_b(H_n; \mathbf{R})$ \cite{matsumoto} (see also \cite[Corollary 10.28]{BC}). We conjecture that the second bounded cohomology vanishes for the groups of piecewise linear homeomorphism of flows \cite{MBT}. \\

After providing a positive answer to Question \ref{q}, we end this introduction by asking a sharper question, where the vanishing of second bounded cohomology is replaced by \emph{bounded acyclicity}, that is, the vanishing of bounded cohomology with trivial real coefficients in all positive degrees. Several groups of geometric origin have vanishing second bounded cohomology but are not boundedly acyclic \cite[Example 3.10]{FLM}.

\begin{question}
\label{q:bac:fg}
Does there exist a finitely generated, boundedly acyclic, left orderable group that is not indicable? In particular, given a quasi-periodic labelling $\rho$, is $G_\rho$ boundedly acyclic?
\end{question}

By Witte-Morris's Theorem, such a group cannot be amenable. The first examples of finitely generated non-amenable boundedly acyclic groups were recently constructed by the first-named author, L\"oh and Moraschini \cite{FLM}. However all of these examples are indicable and we do not know whether they can be left orderable. Moreover, as mentioned above there exist continuum many countable left-orderable groups which are boundedly acyclic and neither indicable nor locally indicable \cite{binatebc}. We believe that further research into Question \ref{q:bac:fg} will lead to a better understanding of the bounded cohomology of finitely generated groups and of left orderable groups. \\

\textbf{Notation:} In this article, we assume $0\in \mathbf{N}$, and all actions will be right actions (unless function notation is used). Accordingly, we will use the conventions $[g, h] = g^{-1}h^{-1}gh$ for commutators, and $g^h = h^{-1}gh$ for conjugacy. \\

\textbf{Acknowledgements:} The first author was supported by an ETH Z\"urich Doc.Mobility Fellowship. The second author was supported by a START-Projekt Y-1411 of the Austrian Science Fund. The authors would like to thank Matt Brin and Matt Zaremsky for useful discussions and comments.

\section{Bounded cohomology and central extensions}
\label{s:BC}

We will work with cohomology and bounded cohomology with trivial real coefficients, and use the definition in terms of the bar resolution. We refer the reader to \cite{Brown} and \cite{BC, monod} for a general and complete treatment of ordinary and bounded cohomology, respectively. \\

For every $n \geq 0$, denote by $\CC^n(G)$ the set of real-valued functions on $G^n$. By convention, $G^0$ is a single point, so $\CC^0(G) \cong \mathbf{R}$ consists only of constant functions. We define differential operators $\delta^\bullet : \CC^\bullet(G) \to \CC^{\bullet+1}(G)$ as follows:
$\delta^0 = 0$
and for $n \geq 1$:
$$\delta^n(f)(g_1, \ldots, g_{n+1}) = f (g_2, \ldots, g_{n+1}) + $$
$$ + \sum\limits_{i = 1}^n (-1)^i f(g_1, \ldots, g_i g_{i+1}, \ldots, g_{n+1}) + (-1)^{n+1} f(g_1, \ldots, g_n).$$

One can check that $\delta^{\bullet+1}\circ \delta^{\bullet} = 0$, so $(\CC^\bullet(G), \delta^\bullet)$ is a cochain complex. We denote by $\ZZ^\bullet(G) := \ker(\delta^\bullet)$ the \emph{cocycles}, and by $\BB^\bullet(G) := \im(\delta^{\bullet-1})$ the \emph{coboundaries}. The quotient $\HH^\bullet(G) := \ZZ^\bullet(G) / \BB^\bullet(G)$ is the \emph{cohomology of $G$ with trivial real coefficients}. We will also call this \emph{ordinary cohomology} to make a clear distinction from the bounded one, which we proceed to define. \\

Restricting to functions $f :G^\bullet \to \mathbf{R}$ which are bounded, meaning that their supremum $\| f \|_\infty$ is finite, leads to a subcomplex $(\CC^\bullet_b(G), \delta^\bullet)$. We denote by $\ZZ^\bullet_b(G)$ the \emph{bounded cocycles}, and by $\BB^\bullet_b(G)$ the \emph{bounded coboundaries}. The vector space $\HH^\bullet_b(G) := \ZZ^\bullet_b(G) / \BB^\bullet_b(G)$ is the \emph{bounded cohomology of $G$} with trivial real coefficients.

One can similarly define cohomology and bounded cohomology with \emph{trivial integral coefficients} $\HH^n_b(G; \mathbf{Z})$: here $\mathbf{R}$ is replaced by $\mathbf{Z}$ with its Euclidean norm, equipped with the trivial $G$-action. This will only appear in Corollary \ref{cor:integral}. In that setting, we will denote $\HH^\bullet_b(G; \mathbf{R})$ instead of simply $\HH^\bullet_b(G)$ to make a clear distinction between real and integral coefficients.

\subsection{Central extensions}

In degree $2$, cohomology is strongly related to central extensions, and this relation can be exploited to study bounded cohomology as well. We refer the reader to \cite[Chapter 4, Section 3]{Brown} and \cite[Chapter 2]{BC} for detailed accounts and proofs. \\

Since we will be working with trivial real coefficients, throughout this paragraph all central extensions will be of the form:
$$1 \to \mathbf{R} \to E \to G \to 1;$$
where $R := \im(\mathbf{R} \to E)$ is contained in the center of $E$. We will refer to both the short exact sequence above and the group $E$ as a central extension. Such an extension \emph{splits} if there exists a homomorphic section $\sigma : G \to E$, i.e. if it is a direct product. Two central extensions $E, E'$ are \emph{equivalent} if there exists a homomorphism $f : E \to E'$ such that the following diagram commutes:
\[\begin{tikzcd}
	&& E \\
	1 & \mathbf{R} && G & 1 \\
	&& {E'}
	\arrow[from=2-1, to=2-2]
	\arrow[from=2-2, to=1-3]
	\arrow[from=1-3, to=2-4]
	\arrow[from=2-4, to=2-5]
	\arrow[from=2-2, to=3-3]
	\arrow[from=3-3, to=2-4]
	\arrow["f"', from=1-3, to=3-3]
\end{tikzcd}\]
Note that by the $5$-lemma $f$ is automatically an isomorphism. A central extension is split if and only if it is equivalent to the external direct product $\mathbf{R} \times G$. \\

We consider pairs of the form $(E,\sigma)$ where $E$ is a central extension and $\sigma : G \to E$ is a set-theoretic section, which is moreover \emph{normalized}, i.e. it satisfies that $\sigma(id_G) =id_E$.
To every such pair one can associate a $2$-cocycle as follows. Observe that the map $$G^2 \to E : (f, g) \mapsto \sigma(f)\sigma(g)\sigma(fg)^{-1}$$ takes values in $R$. Therefore this can be viewed as a map $\omega : G^2
\to \mathbf{R}$ which can be verified to be a cocycle. This is moreover \emph{normalized}, that is, it satisfies $\omega(f, id_G) = \omega(id_G, f) = 0$ for all $f \in G$.

Conversely, let $\omega$ be a normalized cocycle. Define a group $E$ as follows: as a set, $E = \mathbf{R} \times G$. The group law is defined by the following formula:
$$(\lambda, f) \cdot (\mu, g) := (\lambda + \mu + \omega(f, g), fg).$$
Then $E$ is a group, and the set-theoretic inclusions $\mathbf{R} \to E$ and projection $E \to G$ make $E$ into a central extension. Moreover, the set-theoretic inclusion $\sigma : G \to E$ is a normalized section satisfying $\omega(f, g) = \sigma(f)\sigma(g)\sigma(fg)^{-1}$. \\

This correspondence between normalized cocycles and central extensions with preferred normalized sections, descends to the level of cohomology:

\begin{theorem}[{\cite[Section 4, Chapter 3]{Brown}}]
\label{thm:ext}

There is a bijective correspondence between normalized $2$-cocycles $\omega \in \ZZ^2(G)$ and central extensions $E$ with a normalized section $\sigma : G \to E$. This induces a bijective correspondence between cohomology classes $\alpha \in \HH^2(G)$ and equivalence classes $[E]$ of central extensions.
\end{theorem}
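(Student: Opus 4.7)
The plan is to establish the correspondence on normalized cocycles and pairs $(E, \sigma)$ first, and then descend to cohomology classes and equivalences. Starting from a pair $(E, \sigma)$, I would verify that the expression $\omega(f,g) := \sigma(f)\sigma(g)\sigma(fg)^{-1}$ projects to the identity in $G$ and therefore lies in $R$, so it defines a function $G^2 \to \mathbf{R}$. Normalization $\omega(f, id_G) = \omega(id_G, f) = 0$ is immediate from $\sigma(id_G) = id_E$, and the cocycle identity $\delta^2 \omega = 0$ is obtained by comparing the two ways of associating $\sigma(f)\sigma(g)\sigma(h)\sigma(fgh)^{-1}$ in $E$. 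Conversely, given a normalized cocycle $\omega$, I would check that the twisted multiplication on $E_\omega := \mathbf{R} \times G$ is associative (equivalent to the cocycle identity), admits $(0, id_G)$ as two-sided identity (by normalization), and admits inverses $(\lambda, f)^{-1} = (-\lambda - \omega(f, f^{-1}), f^{-1})$. The image of $\mathbf{R}$ in $E_\omega$ is central precisely because $\omega$ vanishes on pairs involving $id_G$, and the obvious projection $E_\omega \to G$ exhibits $E_\omega$ as a central extension.

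Next I would verify that these two constructions are mutually inverse, up to equivalence. Starting from $\omega$, passing to $(E_\omega, \sigma_\omega)$ where $\sigma_\omega(g) := (0, g)$, and computing the associated cocycle gives back $\omega$ tautologically. Starting from $(E, \sigma)$ and forming $(E_\omega, \sigma_\omega)$, the map $(\lambda, g) \mapsto \iota(\lambda)\sigma(g)$, where $\iota : \mathbf{R} \to E$ is the inclusion, is directly checked to be an equivalence $E_\omega \to E$ using the definition of $\omega$.

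Finally, the descent to cohomology has two parts. First, every class in $\HH^2(G)$ contains a normalized representative: starting from any cocycle $\omega'$, subtracting the coboundary $\delta^1 \eta$ with $\eta(g) := \omega'(id_G, g)$ produces a normalized one. Second, two normalized cocycles $\omega, \omega'$ differ by $\delta^1 \eta$ for a normalized $\eta$ (meaning $\eta(id_G) = 0$) if and only if $E_\omega$ and $E_{\omega'}$ are equivalent as extensions. Indeed, given such an $\eta$, the map $(\lambda, g) \mapsto (\lambda + \eta(g), g)$ provides an equivalence by a direct calculation; conversely, any equivalence $f : E_\omega \to E_{\omega'}$ must have the form $f(\lambda, g) = (\lambda + \eta(g), g)$ for some set-theoretic function $\eta : G \to \mathbf{R}$, and the homomorphism condition on $f$ translates precisely into $\omega - \omega' = \delta^1 \eta$, with normalization of $\eta$ coming from $f(0, id_G) = (0, id_G)$. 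The main obstacle is purely bookkeeping — consistently tracking normalization conditions on sections, cocycles, and $1$-cochains at each step — since the result is a classical one whose proof is entirely formal.
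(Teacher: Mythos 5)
Your proposal is correct and follows exactly the two constructions that the paper itself sets up in the paragraphs preceding the statement (cocycle from a pair $(E,\sigma)$ via $\omega(f,g)=\sigma(f)\sigma(g)\sigma(fg)^{-1}$, and the twisted product on $\mathbf{R}\times G$ in the other direction); the paper does not prove the theorem but defers to Brown, and your sketch supplies precisely the standard verifications. The bookkeeping you flag (normalizing an arbitrary cocycle by subtracting $\delta^1\eta$ with $\eta(g)=\omega'(id_G,g)$, and matching equivalences of extensions with coboundaries of normalized $1$-cochains) all checks out.
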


\subsection{$2$-boundedly acyclic groups}

Before moving to the interplay between bounded cohomology and central extensions, we introduce the class of groups which will be most relevant for our purposes.

\begin{definition}
Let $n \geq 1$. We say that $G$ is \emph{$n$-boundedly acyclic} if $\HH^i_b(G) = 0$ for all $1 \leq i \leq n$. It is \emph{boundedly acyclic} if it is $n$-boundedly acyclic for every $n$.
\end{definition}

Since it holds for every group that $\HH^1_b(G) = 0$ \cite[Section 2.1]{BC}, a group $G$ is $2$-boundedly acyclic if and only if $\HH^2_b(G) = 0$. Therefore Theorem \ref{theorem:main} may be restated as: $G_\rho$ is $2$-boundedly acyclic. \\

The main example of boundedly acyclic groups is the following:

\begin{theorem}[Johnson, see {\cite[Chapter 3]{BC}}]
\label{thm:amenable}
Amenable groups are boundedly acyclic.
\end{theorem}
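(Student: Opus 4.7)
The approach is via the standard characterization of amenability: $G$ admits a left-invariant mean $m \colon \ell^\infty(G) \to \mathbf{R}$, a positive linear functional of norm one satisfying $m(L_h f) = m(f)$ for every $h \in G$, where $(L_h f)(g) := f(hg)$. Being norm-decreasing, $m$ sends bounded real-valued functions on $G$ to real numbers without blowing up sup-norms, which is exactly what is needed to stay within the bounded cochain complex.

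Given a bounded cocycle $\omega \in \ZZ^n_b(G)$ with $n \geq 1$, I would build a bounded $(n-1)$-cochain by averaging out its last argument:
$$\eta(g_1, \ldots, g_{n-1}) := (-1)^n \, m_g\bigl[\omega(g_1, \ldots, g_{n-1}, g)\bigr],$$
where $m_g$ indicates that the mean is applied to the function $g \mapsto \omega(g_1, \ldots, g_{n-1}, g)$. The bound $\|\eta\|_\infty \leq \|\omega\|_\infty$ is immediate from $\|m\| = 1$. To verify that $\delta^{n-1}\eta = \omega$, I would apply $m_g$ to every term of the cocycle identity $\delta^n\omega(g_1, \ldots, g_n, g) = 0$: the leading term $\omega(g_2, \ldots, g_n, g)$ and the middle terms for $i < n$ become the corresponding terms of $\delta^{n-1}\eta$, up to the overall $(-1)^n$ factor; the $i = n$ term $(-1)^n\omega(g_1, \ldots, g_{n-1}, g_n g)$ is a left translate in the last variable of the function being averaged, so by left-invariance of $m$ it collapses to $(-1)^n m_g[\omega(g_1, \ldots, g_{n-1}, g)]$, contributing the final term of $\delta^{n-1}\eta$; and the remaining constant-in-$g$ term $(-1)^{n+1}\omega(g_1, \ldots, g_n)$ is unaffected by $m$ and, after rearrangement, produces exactly $\omega$.

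This is the classical argument of Johnson and Trauber, so there is no real conceptual obstacle — the only genuine effort is bookkeeping with signs and indices. Since the construction above yields a bounded primitive of every bounded cocycle in every positive degree, the bounded cochain complex of $G$ is acyclic in positive degrees, whence $\HH^n_b(G) = 0$ for all $n \geq 1$ and $G$ is boundedly acyclic.
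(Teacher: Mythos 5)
Your proof is correct, and it is precisely the classical Johnson--Trauber averaging argument that the paper invokes by citation (the theorem is stated without proof, referring to \cite[Chapter 3]{BC}); one checks that with the differential as defined in Section \ref{s:BC}, applying $m_g$ to the cocycle identity $\delta^n\omega(g_1,\ldots,g_n,g)=0$ and using left-invariance on the $i=n$ term does yield $\delta^{n-1}\eta=\omega$ with your sign $(-1)^n$, and $\|\eta\|_\infty\leq\|\omega\|_\infty$ since $\|m\|=1$. Nothing further is needed.
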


The converse does not hold: there exist boundedly acyclic groups that contain free subgroups. The first such example is due to Matsumoto and Morita \cite{MM}: the group of compactly supported homeomorphisms of $\mathbf{R}^n$. \\

Bounded acyclicity behaves well with respect to amenable extensions:

\begin{theorem}[{Monod \cite[8.6]{monod}, see also \cite{coamenable}}]
\label{thm:amenableext}

Let $n \geq 0$, and let
$$1 \to H \to G \to K \to 1$$
be a group extension such that $K$ is amenable. Then the inclusion $H \to G$ induces an injection in bounded cohomology $\HH^n_b(G) \to \HH^n_b(H)$. In particular, if $H$ is $n$-boundedly acyclic, then so is $G$.
\end{theorem}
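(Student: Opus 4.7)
The plan is to exploit the amenability of the quotient $K$ via a Hochschild--Serre type spectral sequence for bounded cohomology. Associated to the extension $1 \to H \to G \to K \to 1$, there is a first-quadrant spectral sequence with $E_2$-page $E_2^{p,q} = \HH^p_b(K; \HH^q_b(H))$ converging to $\HH^{p+q}_b(G)$, where $\HH^q_b(H)$ is endowed with the natural Banach $K$-module structure descending from the conjugation action of $G$ on $H$.

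The essential input is the strengthening of Theorem \ref{thm:amenable} to dual coefficients: for any amenable group $A$ and any dual Banach $A$-module $V$, one has $\HH^p_b(A; V) = 0$ for all $p \geq 1$; this is proven using an invariant mean on $A$ to produce a contracting homotopy on the standard bounded resolution with coefficients in $V$. Bounded cohomology groups are naturally dual Banach spaces (each $\CC^n_b$ is a dual of an $\ell^1$-space), so we apply this with $A = K$ and $V = \HH^q_b(H)$ to conclude $E_2^{p,q} = 0$ for all $p \geq 1$. The spectral sequence therefore collapses onto the $q$-axis, yielding
$$\HH^n_b(G) \;\cong\; E_2^{0,n} \;=\; \HH^0_b(K; \HH^n_b(H)) \;=\; \HH^n_b(H)^K \;\subseteq\; \HH^n_b(H),$$
where the composite isomorphism is realised by the restriction map (the edge homomorphism of the spectral sequence). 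This yields injectivity of $\HH^n_b(G) \to \HH^n_b(H)$, and the final assertion is immediate: if $H$ is $n$-boundedly acyclic then $\HH^i_b(G)$ injects into $0$ for $1 \leq i \leq n$.

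The main obstacle is the construction of the Hochschild--Serre spectral sequence itself in the bounded-cohomology setting. Unlike in ordinary cohomology, one cannot simply take a projective resolution and pass to invariants; instead, one needs Monod's framework of relatively injective Banach $G$-modules together with the fact that such resolutions compute bounded cohomology, which is the technical heart of the proof. An alternative, more hands-on route that avoids the spectral sequence is to construct a \emph{transfer} cochain map $\tau : \CC^n_b(H) \to \CC^n_b(G)$ splitting the restriction up to coboundary. Fix a normalised set-theoretic section $s : K \to G$ and write each $g \in G$ uniquely as $g = h \cdot s(\bar g)$ with $h \in H$; use this decomposition to extend a bounded $H$-cochain $f$ to a bounded $G$-cochain $\tilde f$, and then average $\tilde f$ over the $K$-family of sections obtained by translating $s$, using an invariant mean on $K$. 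Checking that $\tau$ descends to a one-sided inverse of restriction on cohomology is combinatorially routine but lengthy; either route establishes the theorem.
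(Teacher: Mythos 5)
The paper does not prove this statement --- it is quoted from Monod's book and the coamenability literature --- so I am comparing your proposal against the standard proof there. Your second, ``transfer'' route is morally that standard proof, but your first route, as written, rests on a false claim. You need Johnson's vanishing $\HH^p_b(K;V)=0$ ($p\geq 1$, $K$ amenable) for the coefficient module $V=\HH^q_b(H)$, and you justify its applicability by asserting that ``bounded cohomology groups are naturally dual Banach spaces (each $\CC^n_b$ is a dual of an $\ell^1$-space).'' This is a non sequitur: $\CC^n_b(H)$ is indeed a dual space, but $\HH^q_b(H)=\ZZ^q_b(H)/\BB^q_b(H)$ is a quotient by a subspace that need not be closed, so in general it is only a seminormed (possibly non-Hausdorff) space, and even when it is Hausdorff it need not be a dual Banach $K$-module. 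This matters because amenable groups are \emph{not} boundedly acyclic with arbitrary Banach coefficients --- duality of the coefficients is exactly the hypothesis that makes the invariant-mean argument work. Relatedly, the Hochschild--Serre spectral sequence for bounded cohomology with $E_2^{p,q}=\HH^p_b(K;\HH^q_b(H))$ is not available off the shelf for precisely these reasons; in Monod's book it appears only in Chapter 12, with hypotheses addressing these issues, and its construction logically \emph{uses} the coamenability results you are trying to prove. So route one is both circular with respect to the literature and has a genuine unaddressed gap.

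Your second route contains the correct key idea (average over $K=G/H$ with an invariant mean), but the recipe ``write $g=h\,s(\bar g)$, extend an $H$-cochain $f$ to $\tilde f(g_1,\dots,g_n):=f(h_1,\dots,h_n)$, then average'' does not produce a cochain map: the $H$-component of $g_ig_{i+1}$ is $h_i\cdot s(\bar g_i)^{-1}h_{i+1}s(\bar g_i)\cdot c(\bar g_i,\bar g_{i+1})$ for the factor-set $c$ of the section, not $h_ih_{i+1}$, and averaging over translates of $s$ does not repair this. The clean implementation, which is the proof in the cited sources, factors the restriction as $\HH^n_b(G;\mathbf{R})\to\HH^n_b(G;\ell^\infty(G/H))\cong\HH^n_b(H;\mathbf{R})$, where the second map is the Eckmann--Shapiro induction isomorphism (valid because $\ell^\infty(G/H)$ is the coinduced dual module), and the first is induced by the inclusion of constants $\mathbf{R}\hookrightarrow\ell^\infty(G/H)$. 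Normality of $H$ and amenability of $K$ give a $G$-invariant mean $m:\ell^\infty(G/H)\to\mathbf{R}$ splitting this inclusion as a map of coefficient $G$-modules, hence the first map --- and therefore the composite restriction --- is injective. I recommend you rewrite the argument along these lines rather than attempting to verify the ``extend then average'' transfer directly on the inhomogeneous bar resolution.
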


It is unknown whether bounded acyclicity is stable under directed unions \cite[Section 4.4]{binatebc}. However, in degree $2$ it is:

\begin{proposition}[{\cite[Corollary 4.16]{binatebc}}]
\label{prop:dirun}

A directed union of $2$-boundedly acyclic groups is $2$-boundedly acyclic.
\end{proposition}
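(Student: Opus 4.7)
The plan is to work directly at the level of bounded cochains. Given a bounded $2$-cocycle $\omega \in \ZZ^2_b(G)$ representing an arbitrary class in $\HH^2_b(G)$, I will construct a bounded $1$-cochain $f : G \to \mathbf{R}$ with $\delta f = \omega$, which shows $[\omega] = 0$ and hence $\HH^2_b(G) = 0$. Throughout, write $G = \bigcup_\alpha G_\alpha$ as the directed union of the given $2$-boundedly acyclic subgroups.

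For each $\alpha$, the restriction $\omega|_{G_\alpha^2}$ is a bounded $2$-cocycle on $G_\alpha$, so by the hypothesis $\HH^2_b(G_\alpha) = 0$ it equals $\delta f_\alpha$ for some bounded $f_\alpha : G_\alpha \to \mathbf{R}$. Such a primitive is unique: any difference of bounded primitives is a bounded $1$-cocycle on $G_\alpha$, i.e., a bounded homomorphism $G_\alpha \to \mathbf{R}$, and the only bounded subgroup of $\mathbf{R}$ is trivial. In particular, whenever $G_\alpha \subseteq G_\beta$, the restriction $f_\beta|_{G_\alpha}$ is again a bounded primitive of $\omega|_{G_\alpha^2}$, and therefore equals $f_\alpha$. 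By directedness, I can glue the $f_\alpha$ into a single function $f : G \to \mathbf{R}$ with $f|_{G_\alpha} = f_\alpha$, and the identity $\delta f = \omega$ on $G^2 = \bigcup_\alpha G_\alpha^2$ is immediate from the corresponding identities on each $G_\alpha^2$.

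The main obstacle is to verify that $f$ is bounded: each $f_\alpha$ is bounded by hypothesis, but without a uniform estimate the glued function could have unbounded supremum. I will resolve this with a short quasimorphism argument applied to each $f_\alpha$. Since $\delta f_\alpha = \omega|_{G_\alpha^2}$, the function $f_\alpha$ is a quasimorphism of defect $D(f_\alpha) = \|\omega|_{G_\alpha^2}\|_\infty \leq \|\omega\|_\infty$. A standard induction gives $|f_\alpha(g^n) - n f_\alpha(g)| \leq (n-1) D(f_\alpha)$ for every $g \in G_\alpha$ and $n \geq 1$; dividing by $n$ and letting $n \to \infty$, and using that $f_\alpha$ is bounded so $f_\alpha(g^n)/n \to 0$, yields $|f_\alpha(g)| \leq D(f_\alpha) \leq \|\omega\|_\infty$. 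Hence $\|f_\alpha\|_\infty \leq \|\omega\|_\infty$ uniformly in $\alpha$, giving $\|f\|_\infty \leq \|\omega\|_\infty$ and $f \in \CC^1_b(G)$, which completes the proof.
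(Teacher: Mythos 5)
Your proof is correct. The paper does not prove this proposition at all --- it simply cites \cite[Corollary 4.16]{binatebc} --- so your argument is a genuinely self-contained alternative. Each step checks out: with the bar-resolution convention $\delta^1 f(g_1,g_2) = f(g_1) + f(g_2) - f(g_1g_2)$, a bounded primitive of $\omega|_{G_\alpha^2}$ is indeed unique (the difference of two is a bounded homomorphism to $\mathbf{R}$, hence zero), restriction commutes with $\delta$ so the primitives are compatible along the directed system, and your quasimorphism estimate $|f_\alpha(g)| \leq D(f_\alpha) \leq \|\omega\|_\infty$ is exactly the uniform bound needed to make the glued primitive bounded --- this last point is the real content, and you handle it cleanly. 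It is worth noting that the paper's own machinery gives an even shorter route, which is essentially how it proves the closely related Proposition \ref{prop:2dirun}: take the unique homogeneous representative $\omega$ of a class in $\HH^2_b(G)$ (Theorem \ref{thm:bouarich}); its restriction to each $G_\alpha$ is homogeneous and represents the trivial class, hence vanishes identically by uniqueness; since every pair $(g,h)$ lies in some $G_\alpha$ by directedness, $\omega \equiv 0$. Your argument trades that black box for an elementary computation with primitives and defects, and has the mild added benefit of exhibiting an explicit bounded primitive with $\|f\|_\infty \leq \|\omega\|_\infty$.
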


Several examples of $2$-boundedly acyclic groups were exhibited in \cite{our}. The most relevant one for our purposes will be the following:

\begin{theorem}[{\cite[Theorem 1.3]{our}}]
\label{thm:plbac}
Let $G$ be a subgroup of $\textup{PL}^+([0, 1])$. Then $G$ is $2$-boundedly acyclic.
\end{theorem}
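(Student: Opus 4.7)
The plan is to reduce the statement step by step to a situation that can be handled by a commuting-conjugates argument, using the tools assembled in this section. First, every subgroup of $\textup{PL}^+([0,1])$ is the directed union of its finitely generated subgroups, so by Proposition \ref{prop:dirun} it is enough to prove the theorem for finitely generated $G$.

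Next I would exploit that every element of $\textup{PL}^+([0,1])$ has well-defined one-sided slopes at the endpoints $0$ and $1$, both of which are fixed by any $g \in G$. Taking logarithms yields a group homomorphism
$$\sigma : G \to \mathbf{R}^2, \quad g \mapsto \bigl(\log g'(0^+), \log g'(1^-)\bigr),$$
whose image is a finitely generated abelian, hence amenable, group. By Theorem \ref{thm:amenableext}, the problem reduces to showing $\HH^2_b(N; \mathbf{R}) = 0$ for $N := \ker(\sigma)$. Every element of $N$ has trivial germ at both endpoints, so its support is contained in some compact subinterval of $(0,1)$. Setting
$$N_n := \{g \in N : \textup{supp}(g) \subseteq [1/n, 1-1/n]\}$$
we have $N = \bigcup_n N_n$, and applying Proposition \ref{prop:dirun} once more leaves us with the task of proving that each $N_n$ is $2$-boundedly acyclic.

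The main step is then a commuting-conjugates argument internal to $N_n$. Given any finitely many $g_1, \dots, g_k \in N_n$, their supports lie in a common compact subinterval $[a,b] \subset (1/n, 1-1/n)$. I would construct an element $t \in N_n$ as a PL homeomorphism of $[1/n, 1-1/n]$ (extended by the identity outside) that stretches a small subinterval of $(1/n, a)$ onto $[a,b]$; the conjugates $t^{-1} g_i t$ then have support disjoint from $[a,b]$ and hence commute elementwise with each $g_j$. Combined with the central-extension dictionary of Theorem \ref{thm:ext}, this displacement property converts any bounded $2$-cocycle on $N_n$ into a bounded coboundary via an Eilenberg-swindle construction, using that $N_n$ effectively absorbs commuting isomorphic copies of any finitely generated piece of itself.

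The hardest part, I expect, is this last step. Setting up the commuting-conjugates geometry inside $\textup{PL}^+([0,1])$ is straightforward, but translating it into an \emph{honest bounded} trivialization of a given bounded $2$-cocycle requires a careful swindle-type argument on the associated central extension while maintaining uniform bounds on the resulting $1$-cochain. This technical core is the substance of the original proof in \cite{our}, and any clean argument for Theorem \ref{thm:plbac} has to carry it out in detail.
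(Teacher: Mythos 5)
The first thing to say is that the paper does not actually prove Theorem \ref{thm:plbac}: it imports it wholesale from \cite{our}, adding only the remark that the proof there ``makes crucial use of the fact that such homeomorphisms have finitely many breakpoints.'' So your proposal has to be judged on its own terms. Your two reductions are correct and unobjectionable: writing $G$ as the directed union of its finitely generated subgroups and invoking Proposition \ref{prop:dirun}, and then killing the abelian (hence amenable) group of germs at the endpoints via Theorems \ref{thm:amenable} and \ref{thm:amenableext}, both stay within the class of subgroups of $\textup{PL}^+([0,1])$ and legitimately reduce the problem to groups of compactly supported elements.

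The gap is in the main step, and it is fatal to the approach rather than a matter of omitted detail. The displacing element $t$ must lie in the group $N_n \leq G$ you are studying, but $G$ is an \emph{arbitrary} subgroup of $\textup{PL}^+([0,1])$: you construct $t$ as some piecewise linear homeomorphism of $[1/n, 1-1/n]$, and there is no reason for $G$ to contain it. Concretely, if $G$ is generated by finitely many elements whose supports all have closure equal to $[1/4,3/4]$, then every $t \in G$ maps $(1/4,3/4)$ onto itself, so no conjugate by an element of $G$ can move a full-support generator off its own support; the commuting-conjugates mechanism simply does not exist inside such a group. Since $2$-bounded acyclicity is not inherited by subgroups, you also cannot prove the statement for the full group of compactly supported PL homeomorphisms (where your swindle does work, and in fact gives bounded acyclicity in all degrees) and then restrict. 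What your argument genuinely establishes is the theorem for ``self-displacing'' groups such as $F'$ or the full compactly supported group; the entire content of Theorem \ref{thm:plbac}, and the reason the proof in \cite{our} needs a different mechanism hinging on the finiteness of the breakpoint set, is to handle subgroups that admit no displacement elements whatsoever. (A secondary issue: an element of $N_n$ may have support whose closure is all of $[1/n,1-1/n]$, so the supports of $g_1,\dots,g_k$ need not lie in a compact subinterval of the open interval $(1/n,1-1/n)$, and there is then no room in $(1/n,a)$ to stretch from; but this is minor compared to the membership problem for $t$.)
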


Here $\textup{PL}^+([0, 1])$ denotes the group of orientation-preserving piecewise linear homeomorphisms of the interval $[0, 1]$. The proof of Theorem \ref{thm:plbac} makes crucial use of the fact that such homemomorphisms have finitely many breakpoints. Therefore this theorem does not apply to the group $G_\rho$, which consists of \emph{countably singular} piecewise linear homeomorphisms.

\subsection{Homogeneous cocycles}

A special feature of second bounded cohomology is that every class admits a unique canonical representative. This was proven by Bouarich in \cite{bouarich}, as a tool to show that an epimorphism induces an embedding in second bounded cohomology.

\begin{definition}
Let $G$ be a group. A bounded $2$-cocycle $\omega \in \ZZ^2_b(G)$ is \emph{homogeneous} if $\omega(g^i, g^j) = 0$ for every $g \in G$ and every $i, j \in \mathbf{Z}$.
\end{definition}

Clearly every homogeneous cocycle is normalized. They behave well with respect to conjugacy:

\begin{lemma}
\label{lem:conj}

Let $G$ be a group, and $\omega \in \ZZ^2_b(G)$ a bounded homogeneous $2$-cocycle. Let $E$ be the corresponding central extension, and $\sigma : G \to E$ the normalized section. Then for all $f, g \in G$, it holds that $\sigma(f^g) = \sigma(f)^{\sigma(g)}$. In particular, for all $f, g, h \in G$:
$$\omega(f^h, g^h) = \omega(f, g).$$
\end{lemma}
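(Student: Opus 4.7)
The plan is to work directly in the central extension $E = \mathbf{R} \times G$ with its cocycle multiplication, measure the failure of $\sigma$ to intertwine conjugation by a bounded ``defect'' function $\lambda$, and exploit that this defect scales linearly under taking powers of the first argument, forcing it to vanish by boundedness of $\omega$.

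Two preliminary consequences of homogeneity set the stage. First, $\sigma(h)^{-1} = \sigma(h^{-1})$, which is immediate from $\sigma(h)\sigma(h^{-1}) = (\omega(h, h^{-1}), id_G) = (0, id_G)$. Second, $\sigma(g^n) = \sigma(g)^n$ for every $n \in \mathbf{Z}$, by a routine induction using $\omega(g^k, g) = 0$. Thus $\sigma$ is a genuine homomorphism on every cyclic subgroup. I would then define $\lambda : G \times G \to \mathbf{R}$ by $\sigma(f)^{\sigma(g)} = \lambda(f, g) \cdot \sigma(f^g)$, identifying $\mathbf{R}$ with the central subgroup of $E$. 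An elementary expansion using the cocycle product shows that $\lambda(f, g)$ can be written as a sum of two values of $\omega$, so $|\lambda(f, g)| \leq 2\|\omega\|_\infty$. The goal becomes to show $\lambda \equiv 0$.

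The core step is the scaling identity $\lambda(f^n, g) = n\lambda(f, g)$ for every $n \in \mathbf{Z}$. It follows by applying the automorphism ``conjugation by $\sigma(g)$'' of $E$ to the equality $\sigma(f^n) = \sigma(f)^n$, and using the fact that $(a \cdot \sigma(x))^n = n a \cdot \sigma(x^n)$ in $E$ (another consequence of centrality plus homogeneity applied to $x = f^g$). Combined with the uniform bound on $\lambda$, this immediately forces $\lambda(f, g) = 0$ whenever $f$ has infinite order. When $f$ has finite order $n_0$, applying the identity with $n = n_0$ together with the normalization $\lambda(id_G, g) = 0$ (which follows from $\sigma$ being normalized) closes this remaining case.

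The ``in particular'' statement then drops out by conjugating the fundamental relation $\sigma(f)\sigma(g) = \omega(f, g) \cdot \sigma(fg)$ by $\sigma(h)$: centrality of $\mathbf{R}$ and the identity $\sigma(x)^{\sigma(h)} = \sigma(x^h)$ just proven yield $\sigma(f^h)\sigma(g^h) = \omega(f, g) \cdot \sigma(f^h g^h)$, which compared to $\sigma(f^h)\sigma(g^h) = \omega(f^h, g^h) \cdot \sigma(f^h g^h)$ gives the desired equality. The main obstacle is the scaling identity $\lambda(f^n, g) = n\lambda(f, g)$: once it is in place, everything else is formal, but proving it requires simultaneously using homogeneity (to make $\sigma$ multiplicative on cyclic subgroups), centrality of $\mathbf{R}$ (to pull scalars past powers in $E$), and the automorphism property of inner conjugation in $E$.
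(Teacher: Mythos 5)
Your proposal is correct, and it is in fact more self-contained than the paper's own proof: for the main claim $\sigma(f^g) = \sigma(f)^{\sigma(g)}$ the paper simply cites \cite[Lemma 3.4]{our} and supplies no argument, whereas you prove it from scratch. Your argument is sound at every step: the defect $\lambda(f,g)$ defined by $\sigma(f)^{\sigma(g)} = \lambda(f,g)\cdot\sigma(f^g)$ is indeed central (both sides project to $f^g$), expands as $\omega(g^{-1},f) + \omega(g^{-1}f,g)$ and is therefore bounded by $2\|\omega\|_\infty$; the scaling identity $\lambda(f^n,g) = n\lambda(f,g)$ follows exactly as you say from applying the inner automorphism by $\sigma(g)$ to $\sigma(f^n)=\sigma(f)^n$ and using centrality plus homogeneity of $\omega$ on the cyclic group generated by $f^g$; and boundedness then kills $\lambda$. (A small simplification: the case split on the order of $f$ is unnecessary, since $|n\lambda(f,g)| \leq 2\|\omega\|_\infty$ for all $n$ already forces $\lambda(f,g)=0$ regardless of order, though your treatment of the finite-order case via $\lambda(id_G,g)=0$ is also valid.) Your derivation of the ``in particular'' statement by conjugating the defining relation $\sigma(f)\sigma(g) = \omega(f,g)\cdot\sigma(fg)$ by $\sigma(h)$ and using centrality is essentially identical to the computation the paper gives for that part. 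This bounded-defect-plus-linear-scaling argument is the standard mechanism behind the cited lemma, so you have effectively reconstructed the outsourced ingredient.
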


\begin{proof}
The first part of the statement is \cite[Lemma 3.4]{our}. For the second part, we compute
$$\omega(f^h, g^h) = \sigma(f^h)\sigma(g^h)\sigma((f^h g^h)^{-1}) = \sigma(f)^{\sigma(h)}\sigma(g)^{\sigma(h)}\sigma((fg)^{-1})^{\sigma(h)} = \omega(f, g)^{\sigma(h)}.$$
This in turn is equal to $\omega(f, g)$, since $\omega(f, g)$ is central.
\end{proof}

Homogeneous cocycles are of key importance for computations in second bounded cohomology because of the following theorem:

\begin{theorem}[Bouarich \cite{bouarich}]
\label{thm:bouarich}
Every second bounded cohomology class admits a unique homogeneous representative.
\end{theorem}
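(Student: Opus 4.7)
The plan is to homogenize any given bounded $2$-cocycle via a rotation-number-style averaging construction on each cyclic subgroup, and then deduce uniqueness by playing homogeneity against boundedness. I would start with a normalized representative $\omega \in \ZZ^2_b(G)$ and form the corresponding central extension $E = \mathbf{R} \times G$ with zero section $\sigma(g) = (0,g)$, as in Theorem \ref{thm:ext}. For each $g \in G$, define $s_g(n) \in \mathbf{R}$ for $n \geq 1$ by the equality $\sigma(g)^n = (s_g(n), g^n)$; expanding $\sigma(g)^m \sigma(g)^n = \sigma(g)^{m+n}$ yields
$$s_g(m+n) - s_g(m) - s_g(n) = \omega(g^m, g^n),$$
so $s_g$ is quasi-additive on $\mathbf{N}$ with defect bounded by $\|\omega\|_\infty$. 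A standard Fekete-type argument then produces the limit $h(g) := \lim_{n \to \infty} s_g(n)/n$, with $|h(g)| \leq \|\omega\|_\infty$ and $h(e) = 0$. Since $h$ is bounded, $\omega' := \omega - \delta h \in \ZZ^2_b(G)$ is a bounded cocycle cohomologous to $\omega$; this is the candidate homogeneous representative.

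To verify homogeneity, I would interpret $\sigma'(g) := (-h(g), g)$ as a new normalized section and aim to show it restricts to a homomorphism $\mathbf{Z} \to E$ on every cyclic subgroup, which is equivalent to $\omega'$ being homogeneous. The key identity is
$$h(g^m) = m \cdot h(g) - s_g(m) \qquad \text{for all } g \in G, \, m \geq 1,$$
which I would prove by computing $\sigma(g)^{mk}$ in two ways: directly as $(s_g(mk), g^{mk})$, and as $(\sigma(g)^m)^k$ after writing $\sigma(g)^m = (s_g(m), e) \cdot \sigma(g^m)$ and pulling the central factor out to obtain $s_{g^m}(k) = s_g(mk) - k \cdot s_g(m)$; dividing by $k$ and sending $k \to \infty$ yields the identity. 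Substituting this into $\omega'(g^m, g^n) = \omega(g^m, g^n) - h(g^m) - h(g^n) + h(g^{m+n})$ together with the defect formula from the previous paragraph gives $\omega'(g^m, g^n) = 0$ for $m, n \geq 1$. Applying the same construction to $g^{-1}$, combined with the symmetry $\omega(g^m, g^{-m}) = \omega(g^{-m}, g^m)$ (a direct consequence of the cocycle identity and normalization), then extends vanishing to pairs of negative powers; the mixed-sign cases follow by applying the cocycle identity for $\omega'$ to reduce to the already-handled ones.

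For uniqueness, I would suppose $\omega_1, \omega_2 \in \ZZ^2_b(G)$ are both homogeneous and cohomologous in $\HH^2_b(G)$, so $\omega_1 - \omega_2 = \delta f$ for some bounded $f \in \CC^1_b(G)$. Evaluating this identity at $(g^i, g^j)$ and using that both cocycles vanish on powers yields $f(g^{i+j}) = f(g^i) + f(g^j)$ for all $g \in G$ and $i, j \in \mathbf{Z}$, so $f(g^n) = n \cdot f(g)$ for every $n$. Boundedness of $f$ then forces $f(g) = 0$ for every $g$, hence $\omega_1 = \omega_2$. The main obstacle I anticipate is establishing the key identity $h(g^m) = m h(g) - s_g(m)$ and cleanly propagating homogeneity from positive-power pairs to all of $\mathbf{Z}^2$; once these are handled, the rest is routine bookkeeping inside the central extension.
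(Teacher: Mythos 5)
Your strategy is the standard one, and is essentially the argument behind the proof the paper cites for this theorem: homogenize the ``translation number'' $n \mapsto s_g(n)$ of the section along each cyclic subgroup, subtract the bounded coboundary $\delta h$, and get uniqueness from the fact that a bounded function that is additive on powers must vanish. The existence of the limit $h(g)$, the bound $|h(g)| \leq \|\omega\|_\infty$ (using $s_g(1)=0$), the identity $s_{g^m}(k) = s_g(mk) - k\,s_g(m)$ and hence $h(g^m) = m\,h(g) - s_g(m)$, the resulting vanishing of $\omega'(g^m,g^n)$ for $m,n \geq 1$, and the uniqueness argument are all correct.

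The one step that does not work as written is the passage to mixed signs. The symmetry $\omega(g^m,g^{-m}) = \omega(g^{-m},g^m)$ is true but does not help: it says nothing about the common value of $\omega'$ at such a pair being zero. Likewise, the cocycle identity applied to a triple such as $(g^m, g^{-n}, g^n)$ expresses $\omega'(g^m,g^{-n})$ in terms of $\omega'(g^{-n},g^n)$ and another possibly mixed-sign term, so the proposed reduction does not terminate. What is actually needed is the single diagonal identity $\omega'(g,g^{-1}) = 0$, equivalently $h(g) + h(g^{-1}) = \omega(g,g^{-1})$, and this follows from your own machinery: writing $\sigma(g^{-1}) = \sigma(g)^{-1}\cdot(\omega(g,g^{-1}),e)$ and taking $n$-th powers gives $s_g(n) + s_{g^{-1}}(n) = n\,\omega(g,g^{-1}) - \omega(g^n,g^{-n})$; dividing by $n$ and using boundedness of $\omega$ yields the identity. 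Once $\sigma'(g^{-1}) = \sigma'(g)^{-1}$ is known, combining it with $\sigma'(g^n) = \sigma'(g)^n$ for $n \geq 1$ (applied to both $g$ and $g^{-1}$) gives $\sigma'(g^n) = \sigma'(g)^n$ for all $n \in \mathbf{Z}$, i.e.\ $\sigma'$ restricts to a homomorphism on $\langle g \rangle$, which is exactly homogeneity; torsion elements cause no trouble since every integer power of a torsion element is also a positive power. With this supplement the proof is complete.
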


See \cite[Proposition 2.16]{BC} for a detailed proof. This theorem is especially useful in the case in which the class is trivial, since the $0$-cocycle is homogeneous:

\begin{corollary}
\label{cor:bouarich}

Let $G$ be a group and $H \leq G$ a $2$-boundedly acyclic subgroup. Let $\omega \in \ZZ^2_b(G)$ be a bounded homogeneous $2$-cocycle. Let $E$ be the corresponding central extension, and $\sigma : G \to E$ the normalized section. Then $\sigma|_H : H \to E$ is a homomorphism.
\end{corollary}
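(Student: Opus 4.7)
The plan is to deduce that $\omega$ vanishes identically on $H^2$ and then read off the homomorphism property from the definition of the central extension.

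First, I would restrict $\omega$ to $H^2$, obtaining a bounded $2$-cocycle $\omega|_H \in \ZZ^2_b(H)$. The key observation is that homogeneity survives restriction: since $\omega(g^i, g^j) = 0$ for every $g \in G$ and every $i, j \in \mathbf{Z}$, the same identity holds a fortiori for every $g \in H$, so $\omega|_H$ is a homogeneous bounded $2$-cocycle on $H$.

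Next, I would invoke the hypothesis that $H$ is $2$-boundedly acyclic, so $[\omega|_H] = 0$ in $\HH^2_b(H)$. The zero cocycle is itself homogeneous and represents this trivial class, so by the uniqueness half of Bouarich's Theorem \ref{thm:bouarich}, the homogeneous cocycle $\omega|_H$ must coincide with $0$ on the nose.

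Finally, unwinding the correspondence between normalized cocycles and central extensions with normalized sections: for all $f, g \in H$ one has $\omega(f, g) = \sigma(f)\sigma(g)\sigma(fg)^{-1}$ inside $R \cong \mathbf{R} \subseteq E$. The vanishing $\omega|_H \equiv 0$ therefore translates into $\sigma(f)\sigma(g) = \sigma(fg)$ for all $f, g \in H$, i.e. $\sigma|_H$ is a homomorphism. There is no real obstacle here — the statement is essentially a direct repackaging of the uniqueness part of Bouarich's theorem combined with the definition of the cocycle associated to a central extension with a chosen section.
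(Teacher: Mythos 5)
Your argument is correct and is exactly the paper's proof, just spelled out in more detail: restrict $\omega$ to $H$, note homogeneity is inherited, use $2$-bounded acyclicity of $H$ together with the uniqueness part of Theorem \ref{thm:bouarich} to conclude $\omega|_H \equiv 0$, and then read off the homomorphism property from $\omega(f,g) = \sigma(f)\sigma(g)\sigma(fg)^{-1}$. No issues.
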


\begin{proof}
The bounded cocycle $\omega|_H$ is homogeneous and represents the trivial class, so by Theorem \ref{thm:bouarich} it vanishes identically. Since $\omega(g, h) = \sigma(g)\sigma(h)\sigma(gh)^{-1}$, we conclude.
\end{proof}

The starting point for our proof of Theorem \ref{theorem:main} will be to identify such subgroups $H$ inside $G_\rho$.
As a consequence of Corollary \ref{cor:bouarich}, we also obtain the following:

\begin{proposition}
\label{prop:2dirun}

Let $G$ be a group and suppose that every $2$-generated subgroup of $G$ is $2$-boundedly acyclic. Then $G$ is $2$-boundedly acyclic.
\end{proposition}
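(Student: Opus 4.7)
The plan is to represent each class in $\HH^2_b(G)$ by its unique canonical (homogeneous) cocycle via Theorem \ref{thm:bouarich}, and then argue pointwise that this cocycle must vanish by invoking Corollary \ref{cor:bouarich} on each 2-generated subgroup. The whole proposition is essentially a pointwise application of the corollary, with Bouarich's theorem providing the rigid representative that makes the pointwise argument work.

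Concretely, I would fix a class $\alpha \in \HH^2_b(G)$ and use Theorem \ref{thm:bouarich} to pick its unique homogeneous representative $\omega \in \ZZ^2_b(G)$. Let $E$ be the associated central extension with normalized section $\sigma : G \to E$, so that by definition $\omega(f,g) = \sigma(f)\sigma(g)\sigma(fg)^{-1}$ (identifying $R$ with $\mathbf{R}$). Now, given any two elements $f, g \in G$, set $H := \langle f, g \rangle$. The restriction $\omega|_H$ is still bounded and still homogeneous (homogeneity is a condition on individual cyclic subgroups, hence is inherited by every subgroup), and $H$ is 2-boundedly acyclic by hypothesis. Applying Corollary \ref{cor:bouarich} to $H$ yields that $\sigma|_H$ is a group homomorphism; in particular $\sigma(fg) = \sigma(f)\sigma(g)$, which gives $\omega(f,g) = 0$.

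Since $f,g$ were arbitrary, $\omega$ vanishes identically, so $\alpha = 0$. As $\alpha$ was arbitrary, $\HH^2_b(G) = 0$, which combined with the automatic vanishing $\HH^1_b(G) = 0$ shows that $G$ is 2-boundedly acyclic.

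There is no genuine obstacle here: the content is entirely packaged in Theorem \ref{thm:bouarich} and Corollary \ref{cor:bouarich}. The only mildly delicate point worth flagging in the write-up is that homogeneity of $\omega$ passes to the restriction to $H$, which is what allows the corollary to be applied subgroup-by-subgroup using the \emph{same} section $\sigma$ coming from the global extension $E$.
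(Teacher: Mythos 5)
Your proof is correct and is essentially identical to the paper's: both fix the unique homogeneous representative of a class via Theorem \ref{thm:bouarich} and apply Corollary \ref{cor:bouarich} to each $2$-generated subgroup $\langle f, g\rangle$ to conclude $\omega(f,g)=0$. Your remark that homogeneity passes to restrictions is a fair observation, though it is already built into the statement and proof of Corollary \ref{cor:bouarich}.
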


\begin{proof}
Let $\alpha \in \HH^2_b(G)$, and let $\omega$ be the unique homogeneous representative given by Theorem \ref{thm:bouarich}. This is associated to a central extension $E$ and a section $\sigma : G \to E$ such that $\omega(f, g) = \sigma(f)\sigma(g)\sigma(fg)^{-1}$ for all $f, g \in G$. Since $\langle f, g \rangle$ is $2$-boundedly acyclic, by Corollary \ref{cor:bouarich} we have $\sigma(fg) = \sigma(f)\sigma(g)$. Therefore $\omega \equiv 0$ and so $\alpha = 0$.
\end{proof}

\section{The group $G_{\rho}$}
\label{s:grho}

Given a group action $G<\textup{Homeo}^+(M)$ and a $g\in G$ for a connected $1$-manifold $M$,
we define $$Supp(g)=\{x\in \mathbf{R}\mid x\cdot g\neq x\}.$$
A homeomorphism $f:[0,1]\to [0,1]$ is \emph{compactly supported}
if $\overline{Supp(f)}\subset (0,1)$.
Similarly, a homeomorphism $f:\mathbf{R}\to \mathbf{R}$ is \emph{compactly supported in} $\mathbf{R}$
if $\overline{Supp(f)}$ is contained in a compact interval in $\mathbf{R}$.

A group action $G \leq \Homeo^+(\mathbf{R})$ is \emph{proximal} if for every compact interval $I$ and every open interval $J$ there exists $g \in G$ such that $I \cdot g \subset J$.

\subsection{Thompson's group $F$ and a certain subgroup $H$}

We only recall the features of Thompson's group $F$ that we need, and we refer the reader to 
\cite{CFP, Belk} for comprehensive surveys.
The group $\textup{PL}^+([0,1])$ is the group of orientation preserving piecewise linear homeomorphisms of $[0,1]$.
\begin{definition}\label{F}
The group $F\leq \textup{PL}^+([0,1])$ consists of homeomorphisms that satisfies the following:
\begin{enumerate}
\item  The breakpoints lie in $\mathbf{Z}[\frac{1}{2}]$.
\item The derivatives, whenever they exist, are integer powers of $2$.
\end{enumerate}
\end{definition}
Here \emph{breakpoints} (or singularity points) are points where the derivative does not exist.
It is well known that $F$ is finitely presented and that $F'$ is simple and consists of precisely the set of elements $g\in F$ such that $\overline{Supp(g)}\subset (0,1)$.

The following subgroup of $F$ will play a key role in the definition of $G_{\rho}$.
\begin{definition}\label{H}
$H$ is the subgroup of elements of $F$ whose slopes at $0,1$ coincide.
\end{definition}

\begin{lemma}[{\cite[Lemma 2.4]{HydeLodha}}]
\label{3gen}
$H$ is $3$-generated. $H'$ is simple and consists of precisely the set of elements of $H$ (or $F$) that are compactly supported in $(0,1)$.
In particular, $H'=F'$.
\end{lemma}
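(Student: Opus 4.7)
The plan is to first prove $H' = F'$, from which the simplicity of $H'$ and its characterization as the set of elements of $H$ compactly supported in $(0,1)$ follow immediately from the corresponding known facts about $F'$; and then separately to establish $3$-generation of $H$ by an explicit construction.

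For $H' = F'$, I would first observe that $F' \subseteq H$ since every element of $F'$ has compact support in $(0,1)$, hence slope $1$ (thus equal slopes) at both endpoints. The ``common slope'' map $\phi \colon H \to \mathbf{Z}$, $h \mapsto \log_2 h'(0)$, is then a surjective homomorphism with kernel exactly $F'$: surjectivity is witnessed by any piecewise linear element of $H$ with slope $2$ at both endpoints, and the kernel consists of those elements of $H$ with slope $1$ at both endpoints, which is precisely $F'$. Hence $H/F' \cong \mathbf{Z}$ is abelian, giving $H' \subseteq F'$. For the reverse inclusion, since $H'$ is normal in $H$ and $F' \subseteq H$, the intersection $H' \cap F'$ is normal in $F'$; and $F'$ is simple, so this intersection is either trivial or all of $F'$. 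It is nontrivial because $H$ contains the nonabelian subgroup $F'$, so any nontrivial commutator of elements of $H$ lies in $H' \cap F'$. Therefore $H' \cap F' = F'$, giving $F' \subseteq H'$, and combined with the previous inclusion we obtain $H' = F'$.

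For the $3$-generation of $H$, the strategy is to fix $t \in H$ with slope $2$ at both endpoints, so that $t$ projects to a generator of $H/F' \cong \mathbf{Z}$ and $H = \langle t, F' \rangle$; it then suffices to find two elements $a, b \in F'$ with $\langle t, a, b \rangle \supseteq F'$. A natural approach is to choose $t$ whose dynamics near each endpoint mimic those of $x_0$ acting by conjugation on compactly supported elements, so that the $t$-conjugates of $a$ and $b$ recover the standard infinite generating set $\{x_n : n \geq 1\}$ of $F'$, where $x_{n+1} = x_n^{x_0}$. The main obstacle is precisely this: because $x_0 \notin H$, one must compensate for the discrepancy between the action of $t$ and that of $x_0$ through the choice of $a, b \in F'$, and then verify, using the simplicity of $F'$, that the $t$-orbit of $\{a, b\}$ does generate all of $F'$.
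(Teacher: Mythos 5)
This lemma is quoted in the paper from \cite[Lemma 2.4]{HydeLodha} without proof, so there is no in-paper argument to compare against; I will assess your proposal on its own terms.

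Your treatment of $H' = F'$ (and hence of the simplicity of $H'$ and its identification with the compactly supported elements) is correct and complete: $F' \leq H$ because compactly supported elements have slope $1$ at both ends; the common-slope homomorphism $H \to \mathbf{Z}$ has kernel $F'$ and abelian image, so $H' \leq F'$; and normality of $H'$ in $H$ together with the simplicity of $F'$ forces $H' \cap F' = F'$. This is the standard argument and I have no objections to it.

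The $3$-generation claim, however, is not proved. Reducing to ``find $a, b \in F'$ with $\langle t, a, b \rangle \supseteq F'$'' is the right first move (since $H = \langle t \rangle F'$), but everything after that is a description of a strategy rather than an argument: you do not exhibit $a$ and $b$, you do not specify $t$ beyond its germs at the endpoints, and you do not carry out the verification that the $\langle t \rangle$-conjugates of $a$ and $b$ generate $F'$. This last step is genuinely the content of the lemma and cannot be waved through: $F'$ is not finitely generated, and a group $G$ with $G'$ simple and $G/G' \cong \mathbf{Z}$ need not be finitely generated, so some dynamical input about how $t$ translates supports across $(0,1)$ is indispensable. Note also that invoking ``the simplicity of $F'$'' does not help here the way you suggest: simplicity gives that $F'$ is the normal closure in $H$ of any nontrivial $a \in F'$, but the normal closure requires conjugation by all of $H$, not merely by elements of $\langle t, a, b \rangle$, so one cannot conclude $\langle t, a, b \rangle \supseteq F'$ from it. You correctly identify the compensation for $x_0 \notin H$ as ``the main obstacle,'' but identifying an obstacle is not the same as overcoming it; as written, the $3$-generation half of the lemma remains unproven.
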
 

\begin{definition}\label{Hgenset}
We fix a $3$-element generating set $\{\nu_1,\nu_2,\nu_3\}$ for $H$. 
\end{definition}
The choice of these generators will not be important, so we simply fix an arbitrary such set.

\subsection{Quasi-periodic labellings}
The following notions will play a key role in the definition of $G_{\rho}$.
\begin{definition}\label{labelling}
Consider the set $\frac{1}{2}\mathbf{Z}=\{\frac{1}{2}k\mid k\in \mathbf{Z}\}$.
A \emph{labelling} is a map $$\rho:\frac{1}{2}\mathbf{Z}\to \{a,b,a^{-1},b^{-1}\}$$
which satisfies:
\begin{enumerate}
\item $\rho(k)\in\{a,a^{-1}\}$ for each $k\in \mathbf{Z}$.
\item $\rho(k)\in \{b,b^{-1}\}$ for each $k\in \frac{1}{2}\mathbf{Z}\setminus \mathbf{Z}$.
\end{enumerate}
\end{definition}

It is convenient to view $\rho(\frac{1}{2}\mathbf{Z})$ as a bi-infinite word with respect to the usual ordering of the half integers.
A subset $X\subseteq \frac{1}{2}\mathbf{Z}$ is said to be a \emph{block} if it is of the form 
$$\big\{ k,k+\frac{1}{2}, \ldots ,k+\frac{1}{2}n \big\}$$
for some $k\in \frac{1}{2}\mathbf{Z}, n\in \mathbf{N}$.
Each block is endowed with the ordering inherited from $\frac{1}{2}\mathbf{Z}$.
The set of blocks of $\frac{1}{2}\mathbf{Z}$ is denoted as $\mathbf{B}$.
To each labelling $\rho$ and each block $X=\{k,k+\frac{1}{2},...,k+\frac{1}{2}n\}$, we assign a formal word 
$$W_{\rho}(X) = \rho \big( k \big) \rho \big( k+\frac{1}{2} \big) \ldots \rho \big( k+\frac{1}{2}n \big)$$
which is a word in the letters $\{a,b,a^{-1},b^{-1}\}$.
Such a formal word is called a \emph{subword} of the labelling.

Given a word $w_1 \ldots w_n$ in the letters $\{a,b,a^{-1},b^{-1}\}$, the \emph{formal inverse} of the word is 
$w_n^{-1} \ldots w_1^{-1}$. The formal inverse of $W_{\rho}(X)$ is denoted by $W_{\rho}^{-1}(X)$.

\begin{definition}\label{qplabelling}
A labelling $\rho$ is said to be \emph{quasi-periodic} if the following holds:
\begin{enumerate}
\item For each block $X\in \mathbf{B}$, there is an $n\in \mathbf{N}$ such that whenever 
$Y\in \mathbf{B}$ is a block of size at least $n$, then $W_{\rho}(X)$ is a subword of $W_{\rho}(Y)$.
\item For each block $X\in \mathbf{B}$, there is a block $Y\in \mathbf{B}$ such that $W_{\rho}(Y)=W_{\rho}^{-1}(X)$.
\end{enumerate}
\end{definition}

Note that by \emph{subword} in the above we mean a string of consecutive letters in the word.
An explicit construction of quasi-periodic labellings was provided in \cite[Lemma 3.1]{HydeLodha}.

The following follows directly from Item $2$ in Definition \ref{qplabelling}:

\begin{lemma}\label{lem:notperiodic}
A quasi-periodic labelling is not periodic.
\end{lemma}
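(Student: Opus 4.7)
The approach is a proof by contradiction using only Item 2 of Definition \ref{qplabelling}. Suppose $\rho$ is periodic. Since integer positions carry $a^{\pm 1}$-labels and half-integer positions carry $b^{\pm 1}$-labels, any period must preserve this parity and hence be a positive integer; fix the minimal such period $p$ and write $W = w_0 w_1 \cdots w_{2p-1}$ for the period word $\rho(0) \rho(1/2) \cdots \rho((2p-1)/2)$.

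The key move is to apply Item 2 to the length-$2p$ block $X = \{0, 1/2, \ldots, (2p-1)/2\}$: one obtains that the formal inverse $V := w_{2p-1}^{-1} \cdots w_1^{-1} w_0^{-1}$ of $W = W_\rho(X)$ must be realized as $W_\rho(Y)$ for some length-$2p$ block $Y = \{y, y + 1/2, \ldots, y + (2p-1)/2\}$. By $p$-periodicity, reading $2p$ consecutive half-integer positions starting at $y$ produces the cyclic rotation $W_\rho(Y) = w_s w_{s+1} \cdots w_{s + 2p - 1}$ (indices mod $2p$), where $s = 2y \bmod 2p$. The equality $V = W_\rho(Y)$ therefore rewrites as
\[
w_j^{-1} = w_{(m - j) \bmod 2p}, \qquad j = 0, 1, \ldots, 2p - 1,
\]
where $m := s - 1$.

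To finish, notice that the first letter of $V$ is $w_{2p-1}^{-1} \in \{b, b^{-1}\}$ (because $2p - 1$ is odd, so $w_{2p-1}$ is a $b^{\pm 1}$-label), which forces the starting position $y$ of $Y$ to be a half-integer and hence $s$ to be odd, so $m$ is even. Then the involution $j \mapsto (m - j) \bmod 2p$ has fixed points $j = m/2$ and $j = m/2 + p$ within $\{0, \ldots, 2p - 1\}$, and at any such fixed point $j_0$ the displayed relation collapses to $w_{j_0}^{-1} = w_{j_0}$. Since $a, b, a^{-1}, b^{-1}$ are four pairwise distinct formal symbols, no such $w_{j_0}$ exists, yielding the required contradiction.

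I expect the main obstacle to be a clean statement of the fact that a length-$2p$ window of a $p$-periodic labelling is precisely a cyclic rotation of the period word $W$, with shift determined by the starting position modulo $2p$; this is immediate from the definition of period, but is the conceptual heart of the argument. Everything else is a short parity and fixed-point computation driven entirely by the distinctness of the four formal symbols.
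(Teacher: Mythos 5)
Your proof is correct, and it takes the route the paper itself indicates: the paper states the lemma without proof, remarking only that it ``follows directly from Item $2$ of Definition \ref{qplabelling}'', and your argument supplies exactly the missing details (the parity constraint forcing the shift $m$ to be even, and the fixed point of the involution $j \mapsto (m-j) \bmod 2p$ yielding the impossible relation $w_{j_0}^{-1} = w_{j_0}$).
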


\subsection{The definition of $G_{\rho}$}

We shall need the following notation.
Let $I,J$ be nonempty compact intervals of equal length.
We denote by $T_{J,I}:J\to I$ the unique orientation preserving isometry,
and by $T_{J,I}^{or}:J\to I$ as the unique orientation reversing isometry.
Given two isometric closed intervals $I,J\subset \mathbf{R}$, and homeomorphisms $f\in \textup{Homeo}^+(I),g\in \textup{Homeo}^+(J)$, we say that:

\begin{enumerate}
\item $f\cong_T g$ if 
$$f=T_{I,J}\circ f\circ T_{J,I}.$$ 
\item $f\cong_{T^{or}} g$ if 
$$f=T_{I,J}^{or}\circ f\circ T_{J,I}^{or}.$$
\end{enumerate}

Let $H=\langle \nu_1,\nu_2,\nu_3\rangle<\textup{Homeo}^+([0,1])$ be the group from Definition \ref{H}. We define the homeomorphisms 
$$\zeta_1,\zeta_2,\zeta_3,\chi_1,\chi_2,\chi_3:\mathbf{R}\to \mathbf{R}$$ 
as follows: for each $i\in \{1,2,3\}$ and $n\in \mathbf{Z}$,
$$\zeta_i\restriction [n,n+1]\cong_{T}\nu_i \qquad \text{ if } \rho \big( n+\frac{1}{2} \big) = b,$$
$$\zeta_i\restriction [n,n+1]\cong_{T^{or}}\nu_i \qquad \text{ if } \rho \big( n+\frac{1}{2} \big) = b^{-1},$$
$$\chi_i\restriction \big[ n-\frac{1}{2},n+\frac{1}{2} \big] \cong_T \nu_i \qquad \text{ if }\rho(n)=a,$$
$$\chi_i\restriction \big[ n-\frac{1}{2},n+\frac{1}{2} \big] \cong_{T^{or}} \nu_i\qquad \text{ if }\rho(n)=a^{-1}.$$

\begin{definition}\label{SimpleGroup}
To each labelling $\rho$, we 
associate the group
$$G_{\rho}:=\langle \zeta_1^{\pm 1},\zeta_2^{\pm 1},\zeta_3^{\pm 1},\chi_1^{\pm 1},\chi_2^{\pm 1},\chi_3^{\pm 1}\rangle<\textup{Homeo}^+(\mathbf{R}).$$
We denote the above generating set of $G_{\rho}$ as 
$$\mathbf{S}_{\rho}:=\{\zeta_1^{\pm 1},\zeta_2^{\pm 1},\zeta_3^{\pm 1},\chi_1^{\pm 1},\chi_2^{\pm 1},\chi_3^{\pm 1}\}.$$
\end{definition}

We also define subgroup 
$$\mathcal{K}:=\langle \zeta_1^{\pm 1},\zeta_2^{\pm 1},\zeta_3^{\pm 1}\rangle$$
%$$\mathcal{L}:=\langle \chi_1^{\pm 1},\chi_2^{\pm 1},\chi_3^{\pm 1}\rangle$$ 
of $G_{\rho}$ which is naturally isomorphic to $H$, via the isomorphism
$$\lambda : H \to \mathcal{K} : \nu_i \mapsto \zeta_i , i = 1, 2, 3.$$
%and 
%$$\mathcal{K}'\cong \mathcal{L}'\cong F'.$$
Note that the definition of $\mathcal{K}$ requires us to fix a labelling $\rho$ but we denote them as such for simplicity of notation. \\

The group $G_{\rho}$ is defined for every labelling $\rho$. In the special case in which $\rho$ is quasi-periodic, it is moreover simple:

\begin{theorem}[{\cite[Theorem 1.3]{HydeLodha}}]
Let $\rho$ be a quasi-periodic labelling.
Then the group $G_{\rho}$ is simple.
\end{theorem}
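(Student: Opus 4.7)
The plan is to prove simplicity by showing that any non-trivial normal subgroup $N \trianglelefteq G_\rho$ coincides with $G_\rho$. The strategy combines three ingredients: (a) the simplicity of $H' = F'$, which embeds into $G_\rho$ as subgroups supported on small intervals; (b) a proximality property of the action of $G_\rho$ on $\mathbf{R}$ coming from quasi-periodicity; and (c) a commutator/fragmentation argument to produce elements of $N$ with small support.

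First I would verify proximality of the action: for every compact interval $I$ and open interval $J$, there exists $g \in G_\rho$ with $I \cdot g \subset J$. This follows from quasi-periodicity: the first clause of Definition \ref{qplabelling} says every finite label-pattern recurs with bounded gaps, so words in the $\zeta_i, \chi_i$ can be assembled to implement arbitrary shift-like behavior, while the second clause (closure under formal inversion) supplies orientation-reversing variants needed for contraction. In parallel I would identify, for each integer $n$, a subgroup $H'_n \leq G_\rho$ supported inside $(n, n+1)$ and isomorphic to $H'$; this comes from fragmenting restrictions of $\zeta_i, \chi_i$ to $[n,n+1]$ using elements of $H$ supported in $(0, 1)$, and it relies on the fact (Lemma \ref{3gen}) that $H' = F'$ consists precisely of the compactly supported elements of $H$.

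Given any $1 \neq g \in N$, the element $g$ moves some point, so there exists a small interval $I \subset (n, n+1)$ with $I \cdot g \cap I = \emptyset$. For any $f \in H'_n$ supported in $I$, the commutator $[f, g] = f^{-1} \cdot f^g$ lies in $N$, with $f^{-1}$ supported in $I$ and $f^g$ supported in $I \cdot g$. Taking $f_3 \in H'_n$ supported in $I$ with $[f^{-1}, f_3] \neq 1$ (possible since $H'_n$ is non-abelian), the double commutator $[[f, g], f_3] \in N$ is supported entirely in $I$ and restricts there to $[f^{-1}, f_3]$; this is a non-trivial element of $N \cap H'_n$, so by simplicity of $H'_n$ we obtain $H'_n \leq N$. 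Combined with proximality and conjugation inside $G_\rho$, this gives $H'_J \leq N$ for every open interval $J \subset \mathbf{R}$.

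The main obstacle is the final step: promoting the containment of all local subgroups $H'_J$ to the containment of the original generators $\zeta_i, \chi_i$, whose supports are unbounded. Each $\zeta_i$ is not compactly supported, so cannot be obtained as a finite product of locally supported elements. My approach would be to use Lemma \ref{lem:notperiodic} together with proximality to write $\zeta_i$ (or a suitable conjugate) as a commutator $[a, b]$ where $a \in N$ is locally supported and $b \in G_\rho$ ``shifts'' one side of the support completely away, so that the commutator collapses on one side and reproduces $\zeta_i$ on the other; by normality the commutator lies in $N$, forcing $\zeta_i \in N$. Carefully handling the bi-infinite support of $\zeta_i$ and $\chi_i$, and reconciling the orientations dictated by the quasi-periodic labelling, is where I expect the bulk of the work to lie.
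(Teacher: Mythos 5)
This statement is quoted from \cite{HydeLodha} and the paper you were given contains no proof of it, so I am measuring your proposal against the structural facts the paper does record about $G_\rho$. Against those facts your argument has a fatal gap: the subgroups $H'_n \leq G_\rho$ ``supported inside $(n,n+1)$ and isomorphic to $H'$'' do not exist. Indeed, $G_\rho$ contains \emph{no} nontrivial compactly supported element. By Theorem \ref{characterisation}, every $f \in G_\rho$ satisfies condition 3.a of Definition \ref{Krho}: if $f$ moves a point $x$, it must move every $y$ with $y - x \in \mathbf{Z}$ and $\mathcal{W}(y,k_f) = \mathcal{W}(x,k_f)$ by the same amount; and by the first clause of Definition \ref{qplabelling} such $y$ occur syndetically in $\mathbf{R}$. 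So the support of any nontrivial element is unbounded in both directions. This is exactly why the construction is delicate and why the present paper works with the \emph{special elements} $\lambda_\omega(f)$ of Definition \ref{specialelementsdefinition}, whose supports are syndetic unions $(T+\mathcal{N}_1)\cup(T\cdot\iota+\mathcal{N}_2)$ of translated copies of an interval rather than single intervals: restricting $\lambda(f)$ to a single interval $[n,n+1]$ and extending by the identity produces a homeomorphism violating condition 3.a, hence not in $G_\rho$. Consequently your entire middle section --- choosing $f$ supported in a small interval $I$ with $I\cdot g \cap I = \emptyset$, forming the double commutator $[[f,g],f_3]$ supported in $I$, and invoking simplicity of $H'_n$ --- cannot be run inside $G_\rho$ as written. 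The Epstein/Higman fragmentation scheme needs to be replaced by an argument adapted to elements with syndetic support, which is where the real content of the Hyde--Lodha proof lies.

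Two further points. First, your last step asks for an element $b \in G_\rho$ that ``shifts one side of the support completely away''; but by Item 1 of Proposition \ref{prop:dynamics} every element of $G_\rho$ fixes a point, so no element of $G_\rho$ acts as a translation-like map on all of $\mathbf{R}$, and any candidate $b$ must be chosen with much more care (proximality is a property of the group, not of single elements). Second, your opening claim that proximality ``follows from quasi-periodicity'' by assembling shift-like words is plausible but is itself a nontrivial dynamical statement (it is Item 3 of Proposition \ref{prop:dynamics}, imported from \cite{HLNR}); as stated it is an assertion, not an argument. The overall architecture (nontrivial normal subgroup $\Rightarrow$ locally visible elements $\Rightarrow$ everything, via simplicity of $F'$ and proximality) is the right instinct for homeomorphism groups, but the defining feature of $G_\rho$ is precisely that the ``locally supported'' ingredient is unavailable, so the proposal as written does not prove the theorem.
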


We will also need the following facts about the group $G_\rho$.

\begin{proposition}[{\cite[Lemmas 5.1, 5.3]{HydeLodha}, \cite[Corollary 0.3]{HLNR}}]
\label{prop:dynamics}
Let $\rho$ be a quasi-periodic labelling. Then the following hold:
\begin{enumerate}
    \item Every element in $G_\rho$ fixes a point in $\mathbf{R}$.
    \item The action of $G_\rho$ on $\mathbf{R}$ is minimal.
    \item The action of $G_\rho$ on $\mathbf{R}$ is proximal.
\end{enumerate}
\end{proposition}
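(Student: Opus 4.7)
My plan is to prove the three claims separately, as they have quite different flavours.

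For \emph{(1)} I argue by contradiction. Let $g \in G_\rho$ have no fixed point in $\mathbf{R}$. As $g$ is orientation-preserving, without loss of generality $x \cdot g > x$ for every $x \in \mathbf{R}$. The argument then exploits the tight combinatorial structure of the generators: each $\zeta_i$ acts on $[n, n+1]$ as a fixed orientation-(preserving or -reversing)-copy of $\nu_i$ determined by $\rho(n + \frac{1}{2})$, and in particular fixes every integer; symmetrically, each $\chi_i$ fixes every element of $\frac{1}{2} + \mathbf{Z}$. Tracking how $g$ and its powers permute these half-integer intervals, a fixed-point-free $g$ would produce some power $g^N$ inducing a pure integer shift on the labelling $\rho$, contradicting Lemma \ref{lem:notperiodic}.

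For \emph{(2)} I would show every $G_\rho$-orbit is dense. The subgroup $\mathcal{K}$ is isomorphic to $H$ via $\lambda$ and acts on each unit interval $[n, n+1]$ as $H$ does on $[0, 1]$; since $H \geq H' = F'$ acts with dense orbits on $(0, 1)$ (Lemma \ref{3gen}, combined with $F$'s transitivity on dyadic rationals in $(0,1)$), this already gives dense orbits inside each unit interval. To cross between unit intervals, note that the $\chi_i$'s do not fix integer points in general, so alternating $\zeta_i$'s and $\chi_i$'s allows one to shift a point between neighbouring unit intervals, and iterating reaches any target neighbourhood. For \emph{(3)} I would combine \emph{(2)} with a contraction argument: $H' = F'$ contains elements contracting any compact subinterval of $(0, 1)$ into an arbitrarily small open subinterval, and $\lambda$ transports these into contractions inside any unit interval. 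Given a compact $I$ and an open $J$, apply \emph{(2)} to push $I$ into one unit interval, apply such a contracting element of $\mathcal{K}$ to shrink the image, then apply \emph{(2)} once more to place the tiny image inside $J$.

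The hard step is clearly \emph{(1)}. Linking the dynamical hypothesis (no fixed point) to the combinatorial hypothesis (non-periodicity of $\rho$) requires a careful matching between how $g$ permutes the intervals of $\frac{1}{2}\mathbf{Z}$ and how the labelling is shifted, and this is where the rigidity of the construction does the real work. Steps \emph{(2)} and \emph{(3)} are by contrast instances of a well-established recipe for groups of homeomorphisms whose generators act richly on each piece of a fine partition and can also shuffle points between pieces.
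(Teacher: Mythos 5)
The paper does not prove this proposition; it is imported wholesale from \cite{HydeLodha} and \cite{HLNR}, so your attempt can only be judged on its own merits. Part (2) is essentially fine as a sketch (it needs the observation that some $\nu_i$ moves $\frac{1}{2}$, so that some $\chi_i^{\pm 1}$ pushes points across each integer, but that is routine). Parts (1) and (3), however, each contain a genuine gap. For (1), the proposed mechanism --- ``a fixed-point-free $g$ would produce some power $g^N$ inducing a pure integer shift on the labelling, contradicting Lemma \ref{lem:notperiodic}'' --- is not meaningful as stated: $G_\rho$ does not act on labellings, and a fixed-point-free orientation-preserving homeomorphism of $\mathbf{R}$ with displacement bounded by its word length is perfectly consistent with non-periodicity of $\rho$ (translations have this form in other groups). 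The actual engine is the orientation-reversing symmetry of quasi-periodic labellings (Item $2$ of Definition \ref{qplabelling}) fed through condition 3.b of Definition \ref{Krho}: if $\mathcal{W}(x,k_g)=\mathcal{W}^{-1}(y,k_g)$ with $x-y\in\mathbf{Z}$, then $x-x\cdot g = y'\cdot g - y'$, so the displacement function takes values of both signs and the intermediate value theorem produces a fixed point. Your sketch never touches this inversion symmetry, and without it I do not see how to rule out an everywhere-positive displacement.

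For (3) the argument is circular. Minimality is a statement about orbits of \emph{points}; it does not let you ``push $I$ into one unit interval'' when $I$ is a compact interval of length, say, $100$. Compressing an arbitrarily long compact interval into an interval of length $1$ is already an instance of proximality (take $J$ to be a unit interval), and it is precisely the nontrivial content of \cite[Corollary 0.3]{HLNR} (the ``left orderable monster'' property), whose proof requires the special-elements machinery rather than local contractions of $F'$ transported by $\lambda$. The standard example $\langle x\mapsto x+1,\ x\mapsto x+\sqrt{2}\rangle$ shows that a minimal action can fail to contract any interval at all, so the step ``apply (2) to push $I$ into one unit interval'' cannot be justified by minimality alone. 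Your outer recipe (contract locally inside a unit interval, then relocate by minimality) is sound once $I$ already sits inside a single unit interval, but the reduction to that case is the whole theorem.
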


\section{The structure of $G_\rho$}

The goal of this section is to recall the notion of \emph{special elements} from \cite{HLNR}, and prove Proposition \ref{Fprime}, which strengthens a key structural result about $G_{\rho}$ from \cite{HLNR}. We fix a quasi-periodic labelling $\rho$ for the rest of this section.

\subsection{Special elements}

For each $n\in \mathbf{Z}$, we denote by $\iota_n$ as the unique orientation reversing isometry $\iota_n:[n,n+1)\to (n,n+1]$. We extend this to a map $\iota:\mathbf{R}\to \mathbf{R}$ as $x\cdot \iota=x\cdot \iota_n$ where $n\in \mathbf{Z}$ is so that $x\in [n,n+1)$. (Note that $\iota_n=T_{[n,n+1),(n,n+1]}^{or}$ as defined before, however this more specialised notation simplifies what appears below.)

Given an $x\in \mathbf{R}$ and $k\in \mathbf{N}$, we define a word $\mathcal{W}(x,k)$ as follows. 
Let $y\in \frac{1}{2}\mathbf{Z}\setminus \mathbf{Z}$ such that $x\in [y-\frac{1}{2},y+\frac{1}{2})$.
Then we define 
$$\mathcal{W}(x,k) = \rho ( y-\frac{1}{2}k ) \rho \big( y-\frac{1}{2}(k-1) \big) \ldots \rho \big( y \big) \ldots 
\rho \big( y+\frac{1}{2}(k-1) \big) \rho \big( y+\frac{1}{2} k \big).$$
We denote by $\mathcal{W}^{-1}(x,k)$ the formal inverse of the word $\mathcal{W}(x,k)$.
Given a compact interval $J\subset \mathbf{R}$ with endpoints in $\frac{1}{2}\mathbf{Z}$ and 
$n \in \mathbf{N}$, we define a word $\mathcal{W}(J,k)$ as follows. Let
$$y_1=inf(J)+\frac{1}{2}, \qquad y_2=sup(J)-\frac{1}{2}.$$
Then we define 
\begin{small}
$$\mathcal{W}(J,k) = \rho \big( y_1-\frac{1}{2}k \big) \rho(y_1-\frac{1}{2}(k-1)) \ldots \rho(y_1) 
\ldots \rho(y_2) \ldots \rho(y_2+\frac{1}{2}(k_2-1))\rho(y_2+\frac{1}{2}k_2).$$
\end{small}
We consider the set of pairs $$\Omega=\{(W,k)\mid W\in \{a,b,a^{-1},b^{-1}\}^{<\mathbf{N}}, k \in \mathbf{N}\text{ such that } |W|=2k+1\}.$$

\begin{definition}\label{specialelementsdefinition}
Recall the map $\lambda$ from Definition \ref{SimpleGroup}.
Given an element $f\in F'$ and $\omega=(W,k) \in \Omega$, 
we define the {\em special element} $\lambda_{\omega}(f)\in \textup{Homeo}^+(\mathbf{R})$ as follows: 
For each $n\in \mathbf{Z}$, we let 
$$\lambda_{\omega}(f)\restriction [n,n+1]=
\begin{cases}
\lambda(f)\restriction [n,n+1]\text{ if }  \mathcal{W}([n,n+1],k)=W^{\pm 1}; \\
id\restriction [n,n+1] \text{ otherwise}.
\end{cases}
$$
\end{definition}

More general special elements were defined in \cite{HLNR}, but this subclass is the only one we will need for our purposes.

\begin{proposition}[{\cite[Proposition 3.5]{HLNR}}]
\label{Specialelements}
Let $\rho:\frac{1}{2}\mathbf{Z}\to \{a,a^{-1},b,b^{-1}\}$ be a quasi-periodic labelling.
Let $\omega\in \Omega$ and $f\in F'$. Then $\lambda_{\omega}(f) \in G_{\rho}$.
\end{proposition}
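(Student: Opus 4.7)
My plan is to prove the proposition by induction on the radius parameter $k \geq 0$ appearing in $\omega = (W, k)$.

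For the base case $k = 0$, the word $W$ is a single letter and the selector condition $\mathcal{W}([n, n+1], 0) = W^{\pm 1}$ reduces to $\rho(n + \tfrac{1}{2}) = W^{\pm 1}$. Since by Definition \ref{labelling} one has $\rho(n + \tfrac{1}{2}) \in \{b, b^{-1}\}$ for every $n \in \mathbf{Z}$, the condition holds either at every $n$ (when $W \in \{b, b^{-1}\}$), giving $\lambda_\omega(f) = \lambda(f) \in \mathcal{K} \leq G_\rho$, or at no $n$ (when $W \in \{a, a^{-1}\}$), giving $\lambda_\omega(f) = \mathrm{id}_\mathbf{R} \in G_\rho$.

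For the inductive step I assume the statement for all $j < k$. Given $\omega = (W, k)$ with $W = w_{-k} \cdots w_0 \cdots w_k$, set $W' := w_{-k+1} \cdots w_{k-1}$ and $\omega' := (W', k-1)$, so that by induction $\lambda_{\omega'}(g) \in G_\rho$ for every $g \in F'$. The two new outer positions $y \pm \tfrac{k}{2}$ (with $y = n + \tfrac{1}{2}$) are integers when $k$ is odd and non-integer half-integers when $k$ is even; correspondingly $w_{\pm k}$ is precisely the type of label that determines the orientation of the generator $\chi_i$ (respectively $\zeta_j$) on the relevant interval $[y \pm \tfrac{k}{2} - \tfrac12, y \pm \tfrac{k}{2} + \tfrac12]$ (respectively $[y \pm \tfrac{k}{2} - \tfrac12, y\pm \tfrac{k}{2} + \tfrac12]$). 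I would then construct $\lambda_\omega(f)$ as a product of conjugates and commutators of the elements $\lambda_{\omega'}(g)$ (for suitably chosen $g \in F'$) with an appropriate generator $\chi_i$ or $\zeta_j$, using both the multiplicativity of $\lambda_{\omega'}$ on $F'$ and the fact, from $F' = H'$, that $f$ may be written as $[h_1, h_2]$ with $h_i \in H$ in order to produce the required cancellations.

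The main obstacle is enforcing the correct support pattern of the resulting element: it must act as $\lambda(f)$ on intervals $[n, n+1]$ realising $W$ or $W^{-1}$, and as the identity on intervals where the inner pattern $W'$ (or $W'^{-1}$) matches but at least one outer label disagrees with the corresponding $w_{\pm k}^{\pm 1}$. Producing this selection requires a careful choice of auxiliary elements and meticulous tracking of orientations, and must respect the inversion symmetry built into the definition of $\lambda_\omega$ that allows either $W$ or $W^{-1}$ to be matched. I expect the construction to split into subcases according to the signs of $w_{\pm k}$, and that the non-periodicity of $\rho$ (Lemma \ref{lem:notperiodic}) together with quasi-periodicity will be needed to ensure that the auxiliary elements behave consistently across the relevant intervals—in particular, so that the commutator cancellations are not spoiled by the presence of accidental global symmetries of the labelling.
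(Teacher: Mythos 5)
First, a remark on scope: this paper does not prove Proposition \ref{Specialelements} at all; it is imported verbatim from \cite{HLNR}, so there is no in-paper proof to compare against and I am judging your argument on its own terms. Your base case is correct: for $k=0$ the selector $\mathcal{W}([n,n+1],0)=\rho(n+\tfrac12)\in\{b,b^{-1}\}$ is satisfied at every $n$ or at no $n$, yielding $\lambda_\omega(f)=\lambda(f)\in\mathcal{K}'\leq G_\rho$ or $\lambda_\omega(f)=id$. The overall strategy (induction on the radius, building the new special element from shorter ones via commutators with generators) is also the right one.

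The problem is that the inductive step, which is the entire content of the proposition, is not carried out: you explicitly defer ``the main obstacle,'' namely forcing the correct support pattern, to a ``careful choice of auxiliary elements'' that is never exhibited. Moreover, the mechanism you do gesture at cannot work as stated. You propose to exploit the orientation of a generator $\chi_i$ or $\zeta_j$ on the interval of radius $\tfrac12$ around the new outer position $y\pm\tfrac{k}{2}$. But for $k\geq 2$ that interval is disjoint from the interior of $[n,n+1]$, and since each generator preserves every interval of its grid, conjugating $\lambda_{\omega'}(g)$ by such a generator alters its restriction to $[n,n+1]$ only through the generator's action on $[n,n+1]$ itself, i.e.\ only through the labels $\rho(n),\rho(n+\tfrac12),\rho(n+1)$; it carries no information about $\rho(y\pm\tfrac{k}{2})$. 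The actual construction must propagate the outer labels inward one half-step at a time, using the fact that an element carried into one half of an interval of the integer grid (which half depends on the orientation, hence on the label) overlaps an interval of the half-integer grid and vice versa, so that commutators of such elements realize the conjunction of conditions on adjacent labels. This chain of overlapping half-supports is the heart of the proof in \cite{HLNR} and is absent from your sketch. Two smaller points: the assertion that every $f\in F'$ is a single commutator $[h_1,h_2]$ with $h_i\in H$ needs justification (a product of commutators would suffice and is easier to source); and while $\lambda_\omega(f)\in K_\rho$ can be checked directly from Definition \ref{Krho} with constant $k+1$, so that Theorem \ref{characterisation} would finish the proof in one line, that theorem is itself proved in \cite{HLNR} downstream of the special elements, so this shortcut is circular at the source.
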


We recall the following \emph{characterization of elements} of $G_{\rho}$.
This provides an alternative, ``global" description of the groups as comprising of elements satisfying dynamical and combinatorial hypotheses,
and is reminiscent of similar descriptions for various generalisations of Thompson's groups. 

\begin{definition}\label{Krho}
Let $K_{\rho}$ be the set
of homeomorphisms $f\in \textup{Homeo}^+(\mathbf{R})$ satisfying the following:
\begin{enumerate}
\item $f$ is a countably singular piecewise linear homeomorphism of $\mathbf{R}$ with a discrete set of singularities, all of which lie in $\mathbf{Z}[\frac{1}{2}]$.
\item $f'(x)$, wherever it exists, is an integer power of $2$.
\item There is a $k_f\in \mathbf{N}$ such that the following holds.
\begin{enumerate}
\item[3.a] Whenever $x,y\in \mathbf{R}$ satisfy that $$x-y\in \mathbf{Z} \text{ and }  \mathcal{W}(x,k_f)=\mathcal{W}(y,k_f),$$ it holds that $$x-x\cdot f = y-y\cdot f.$$
\item[3.b] Whenever $x,y\in \mathbf{R}$ satisfy that $$x-y\in \mathbf{Z}\text{ and } \mathcal{W}(x,k_f)=\mathcal{W}^{-1}(y,k_f),$$ it holds that $$x-x\cdot f=y'\cdot f-y' \qquad \text{ where }y'=y\cdot \iota.$$
\end{enumerate}
\end{enumerate}
\end{definition}

\begin{theorem}[{\cite[Theorem 1.8]{HLNR}}]
\label{characterisation}

Let $\rho$ be a quasi-periodic labelling.
The groups $K_{\rho}$ and $G_{\rho}$ coincide.
\end{theorem}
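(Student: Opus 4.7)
The plan is to prove $K_\rho = G_\rho$ by two inclusions, with the forward inclusion $G_\rho \subseteq K_\rho$ being routine and the reverse $K_\rho \subseteq G_\rho$ being the substantive direction. For $G_\rho \subseteq K_\rho$, I would first verify that each generator lies in $K_\rho$. Conditions 1 and 2 of Definition \ref{Krho} follow immediately from $\nu_i \in H \leq F \leq \textup{PL}^+([0,1])$: the breakpoints lie in $\mathbf{Z}[1/2]$ and slopes are powers of $2$. For condition 3, the action of $\zeta_i$ on $[n, n+1]$ depends only on $\rho(n+1/2)$, so $k_{\zeta_i} = 0$ suffices; similarly $\chi_i$ acts on $[n, n+1)$ in a manner controlled by $\rho(n)$ and $\rho(n+1)$, so $k_{\chi_i} = 1$ works. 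The orientation-reversing clause 3.b is verified by a direct computation, with the role of $\iota$ precisely tracking the conjugation by the orientation-reversing isometries $T^{or}$ appearing in the definition of the generators. I would then check that $K_\rho$ is a subgroup: closure under inversion is symmetric in the displacement formula, while for composition one expects a parameter bound of the form $k_{fg} \leq k_f + k_g + C$, where $C$ controls the maximum integer-displacement of $f$ so that $\mathcal{W}(xf, k_g)$ can be read off from a deeper word at $x$.

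The inclusion $K_\rho \subseteq G_\rho$ is the heart of the proof. Given $f \in K_\rho$ with parameter $k_f$, condition 3 forces $f|_{[n,n+1]}$ to depend only on $\mathcal{W}([n,n+1], k_f)$, up to translation and orientation-reversal. Since there are only finitely many words of length $2k_f + 1$, the element $f$ is entirely determined by finitely many ``local patterns'' together with the global data of how $f$ displaces $\mathbf{Z}$. My strategy is threefold: (a) first, construct an auxiliary $h_0 \in G_\rho$ such that $f h_0^{-1}$ fixes every integer; (b) observe that $f h_0^{-1}$ then restricts on each $[n, n+1]$ to a compactly supported element of $F'$ (via Lemma \ref{3gen}), which by condition 3 depends only on $\mathcal{W}([n, n+1], k_f)$; (c) for each word-class $\{W, W^{-1}\}$ of length $2k_f + 1$, invoke Proposition \ref{Specialelements} to produce a special element $\lambda_{(W, k_f)}(g_W) \in G_\rho$ realizing the corresponding local pattern, and multiply these finitely many special elements together to recover $f h_0^{-1}$.

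The main obstacle is step (a): constructing $h_0 \in G_\rho$ that matches $f$ on the integer lattice. The displacement map $n \mapsto nf - n$ is itself constrained by condition 3 to depend only on the local label pattern $\mathcal{W}([n, n+1], k_f)$, so it is a ``quasi-periodic'' pattern rather than arbitrary. I would build $h_0$ as a product of conjugates of the $\chi_i$'s---the only generators that move integers---using proximality of the action (Proposition \ref{prop:dynamics}) to shift copies of $\chi_i$ onto any prescribed compact region, and quasi-periodicity (Definition \ref{qplabelling}) to guarantee that the labels supporting those copies match those prescribed by $f$. The delicate part is to ensure these corrections compose to give exactly the integer-displacement profile of $f$ without introducing spurious motion elsewhere; I expect this to require an inductive construction over an exhaustion of $\mathbf{R}$ by compact intervals, each inductive step lying in $G_\rho$ and only affecting a bounded region. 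Once $h_0$ is in hand, steps (b) and (c) follow from a direct unwinding of the definitions.
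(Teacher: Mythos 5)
This statement is imported verbatim from \cite[Theorem 1.8]{HLNR}; the paper you were given contains no proof of it, so there is no internal argument to compare yours against. Judged on its own terms, your proposal gets the easy direction and the general shape of the hard direction right (the two-inclusion strategy, the role of the parameter $k_f$, and the use of special elements as in Proposition \ref{Specialelements} to realize the finitely many local patterns of $fh_0^{-1}$ are all in the spirit of what is actually done in \cite{HLNR}), but step (a) contains a genuine gap.

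The gap is this: your construction of the correcting element $h_0$ proceeds by ``an inductive construction over an exhaustion of $\mathbf{R}$ by compact intervals, each inductive step lying in $G_\rho$ and only affecting a bounded region,'' and earlier by ``shifting copies of $\chi_i$ onto any prescribed compact region'' via proximality. Neither device is available. First, no nontrivial element of $G_\rho$ affects only a bounded region: if $g \in G_\rho = K_\rho$ moves some $x$, then by quasi-periodicity the word $\mathcal{W}(x,k_g)$ recurs (at integer translates, since letter types at integer and half-integer positions must match) in every sufficiently long block, and condition $3$ of Definition \ref{Krho} forces $g$ to move the corresponding translate of $x$ by the same amount; hence $Supp(g)$ is unbounded in both directions. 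In particular an ``inductive step'' that corrects $f$ on one compact interval while acting trivially elsewhere cannot be an element of $G_\rho$. Second, conjugation does not truncate supports: $Supp(\chi_i^h) = Supp(\chi_i)\cdot h$ is still unbounded, so proximality lets you place the image of a fixed compact interval wherever you like, but it does not produce a copy of $\chi_i$ supported on a compact region. Finally, even if each stage were legitimate, an exhaustion argument produces an infinite product, and $G_\rho$ (being a group, indeed a finitely generated one) is only closed under finite products. The correct substitute, and the reason Proposition \ref{Specialelements} and its more general variants in \cite{HLNR} exist, is to build $h_0$ out of globally defined, quasi-periodically supported homeomorphisms whose membership in $G_\rho$ is certified separately; this is where the real work of the theorem lies, and your outline does not yet contain it. A smaller but real issue in steps (b)--(c): arranging that $fh_0^{-1}$ fixes each integer is not enough to make its restrictions lie in $F'_{[n,n+1]}$; you must also kill the germs (slopes) at the integers, which requires a further correction of the same global nature.
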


\subsection{Interval stabilizers}

The goal of this section is to describe subgroups generated by two elements which pointwise fix an interval. We start with the following definition.

\begin{definition}
\label{def:atom}

A homeomorphism $f\in \textup{Homeo}^+(\mathbf{R})$ is said to be \emph{stable} if there exists an $n\in \mathbf{N}$ 
such that the following holds: For any compact interval $I$ of length at least $n$, there is an integer $m\in I$
such that $f$ fixes a neighbourhood of $m$ pointwise.
Given a stable homeomorphism $f\in \textup{Homeo}^+(\mathbf{R})$ and an interval 
$[m_1,m_2]$, the restriction $f\restriction [m_1,m_2]$ is said to be an \emph{atom of $f$}, if $m_1, m_2 \in \mathbf{Z}$ and $f$ fixes an open neighbourhood of $\{m_1, m_2\}$ pointwise. We will also refer to $[m_1, m_2]$ as the atom: the use of the word will be clear from the context.

Given a stable homeomorphism $f$, we may express $\mathbf{R}$ as a union of intervals with integer endpoints $\{I_{\alpha}\}_{\alpha\in P}$ such that $f\restriction I_{\alpha}$ is an atom for each $\alpha\in P$ and different intervals intersect in at most one endpoint. We call this a \emph{cellular decomposition of $\mathbf{R}$ suitable for $f$}.
\end{definition} 

In \cite{HydeLodha} and \cite{HLNR} the term \emph{atom} refers to atoms in the sense of Definition \ref{def:atom}, which are moreover minimal with respect to inclusion. This definition has the advantage that a cellular decomposition of the real line suitable for a stable homeomorphism $f$ is unique. However, in the proof of Proposition \ref{Fprime} we will need to consider cellular decompositions which are suitable for pairs of elements, so this more relaxed notion will be more convenient.

\begin{definition}
\label{def:conjugateatoms}

Two atoms $f\restriction [m_1,m_2]$ and $f\restriction [m_3,m_4]$ are said to be \emph{conjugate} if $m_2-m_1=m_4-m_3$ and the following holds.
$$f\restriction [m_1,m_2]=h^{-1}\circ f \circ h\restriction [m_3,m_4]\qquad h=T_{[m_1,m_2],[m_3,m_4]};$$
and they are \emph{flip-conjugate} if:
$$f\restriction [m_1,m_2]=h^{-1} \circ f\circ h\restriction [m_3,m_4]\qquad h=T_{[m_1,m_2],[m_3,m_4]}^{or}.$$

Let $f$ be a stable homeomorphism and fix a cellular decomposition $\{I_{\alpha}\}_{\alpha\in P}$ of $\mathbf{R}$ suitable for $f$, and $n \in \mathbf{N}$.
We refine the collection of atoms $\{I_{\alpha}\}_{\alpha\in P}$ into a collection of \emph{decorated atoms}: $$\mathcal{T}_n(f, P)=\{(I_{\alpha},n)\mid \alpha \in P\}.$$
We say that two decorated atoms
$(I_{\alpha}, n)$ and $(I_{\beta}, n)$ are \emph{equivalent} if either of the following holds:
\begin{enumerate}
\item $I_{\alpha},I_{\beta}$ are conjugate and $\mathcal{W}(I_{\alpha},n)=\mathcal{W}(I_{\beta},n)$;
\item $I_{\alpha},I_{\beta}$ are flip-conjugate  and $\mathcal{W}(I_{\alpha},n)=\mathcal{W}^{-1}(I_{\beta},n)$.
\end{enumerate}

The cellular decomposition $\{I_{\alpha}\}_{\alpha\in P}$ of $f$ is said to be \emph{uniform} if there are finitely many equivalence classes of decorated atoms in $\mathcal{T}_n(f, P)$. Note that this definition does not depend on $n$: if there are finitely many equivalence classes in $\mathcal{T}_n(f, P)$, then this is true for any $n\in \mathbf{N}$, since there are finitely many words in $\{a,b,a^{-1},b^{-1}\}^n$.

Let $\zeta$ be an equivalence class of elements in $\mathcal{T}_n(f, P)$.
We define the homeomorphism $f_{\zeta}$ as
$$f_{\zeta}\restriction I_{\alpha}=
\begin{cases}
f\restriction I_{\alpha}\text{ if }(I_{\alpha},n)\in \zeta; \\
f_{\zeta}\restriction I_{\alpha}=id\restriction I_{\alpha}\text{ if }(I_{\alpha}, n)\notin \zeta.
\end{cases}$$
If $\zeta_1, \ldots ,\zeta_m$ are the equivalence classes of elements in $\mathcal{T}_n (f, P)$,
then the list of homeomorphisms $f_{\zeta_1}, \ldots ,f_{\zeta_m}$ is called the \emph{cellular decomposition of $f$} determined by the cellular decomposition $\{ I_\alpha \}_{\alpha \in P}$ of $\mathbf{R}$ and the integer $n \in \mathbf{N}$. Note that $f = f_{\zeta_1} \cdots f_{\zeta_m}$.
\end{definition}

The main result of this section is the following:

\begin{proposition}\label{Fprime}
Let $\rho$ be a quasi-periodic labelling.
Let $f,g\in G_{\rho}$ be elements such that there exists an open interval $I$ which is pointwise fixed by both $f$ and $g$.
Then there is a subgroup $A<G_{\rho}$ isomorphic to a finite direct sum of copies of $F'$, which contains both $f$ and $g$.
\end{proposition}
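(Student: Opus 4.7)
My plan is to exploit the combinatorial rigidity of $G_\rho$ captured by Theorem~\ref{characterisation}, together with the quasi-periodicity of $\rho$, to extract a common uniform cellular decomposition of $\mathbf{R}$ suitable for both $f$ and $g$, and then identify the pieces of this decomposition with copies of $F'$. The first step is to use the characterization $G_\rho = K_\rho$ to establish stability. Let $k = \max(k_f, k_g)$ be the maximum of the constants from condition 3 of Definition~\ref{Krho}. Since $f$ fixes the open interval $I$ pointwise, after shrinking $I$ we may assume that $I$ contains an integer $m$, so that $f$ fixes a neighbourhood of $m$. Because $f$'s displacement $x - xf$ is determined, up to the allowed orientation reversal encoded by $\iota$, by the labelling window of size $2k+1$ around $x$, conditions 3a and 3b force $f$ to fix a neighbourhood of every integer $m'$ whose windowed labelling agrees with that of $m$ up to the finitely many admissible reversals. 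By Definition~\ref{qplabelling}.1 the set of such $m'$ is syndetic, and running the same argument for $g$ shows that the intersection $S \subset \mathbf{Z}$ of the two fixed-neighbourhood sets is also determined by a finite nonempty list of compatible local patterns and is therefore syndetic as well. In particular $f$ and $g$ are stable, and I would choose a cellular decomposition $\{I_\alpha\}_{\alpha \in P}$ of $\mathbf{R}$, with all endpoints in $S$, simultaneously suitable for $f$ and $g$, and with atom lengths uniformly bounded by some $L \in \mathbf{N}$.

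Next, I would fix $n \geq \max(k, L)$ and pass to the decorated atoms $\mathcal{T}_n(f, P)$. Since each atom has length at most $L$, the word $\mathcal{W}(I_\alpha, n)$ ranges over a finite alphabet, so the decorated atoms split into finitely many equivalence classes $\zeta_1, \ldots, \zeta_m$, and the decomposition is uniform in the sense of Definition~\ref{def:conjugateatoms}. Because $n \geq k$, conditions 3a and 3b force the restrictions of $f$ to any two equivalent decorated atoms to be transported to one another by the corresponding isometry or orientation-reversing isometry, and similarly for $g$. Hence the decompositions $f = f_{\zeta_1} \cdots f_{\zeta_m}$ and $g = g_{\zeta_1} \cdots g_{\zeta_m}$ from Definition~\ref{def:conjugateatoms} are well-defined.

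For each equivalence class $\zeta_i$ I would define $A_i$ as the group of homeomorphisms of $\mathbf{R}$ supported on $\bigcup_{(I, n) \in \zeta_i} I$, restricting on each atom in $\zeta_i$ to a PL homeomorphism with breakpoints in $\mathbf{Z}[\tfrac{1}{2}]$ and slopes in $2^\mathbf{Z}$, and whose restrictions to distinct atoms of $\zeta_i$ are related to one another via the conjugations and flip-conjugations defining the equivalence. Fixing a representative atom $I_0 = [m_1, m_2] \in \zeta_i$, restriction to $I_0$ yields an isomorphism $A_i \to F'([m_1, m_2]) \cong F'$, the last isomorphism being the standard identification. Elements of distinct $A_i$'s have disjoint supports, so $A := \langle A_1, \ldots, A_m \rangle$ is their internal direct product $A \cong A_1 \oplus \cdots \oplus A_m \cong (F')^m$, and $f, g \in A$ by the decompositions above.

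The main obstacle is to verify that $A_i \leq G_\rho$, i.e.\ that every uniform extension of an element of $F'([m_1, m_2])$ to the full union of atoms in $\zeta_i$ satisfies condition 3 of Definition~\ref{Krho}. This is the natural generalization of Proposition~\ref{Specialelements}, in which the length-one atoms $[n, n+1]$ are replaced by atoms of length up to $L$. The verification goes through with $k_h := \max(k, L)$, because such a window is wide enough to simultaneously detect both the equivalence class of the atom containing any given point and the position of that point within its atom; matching labelling windows (up to reversal) then automatically force the displacement identities required by 3a and 3b. A secondary non-trivial point is the syndeticity of $S$ in the first step, which relies on the fact that $f$'s displacement is a genuinely bilateral local function of the labelling, encoded jointly by 3a and 3b together with the involution $\iota$.
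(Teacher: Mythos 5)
Your overall strategy---propagating the fixed interval via conditions 3a/3b of Definition \ref{Krho} and quasi-periodicity to obtain a syndetic set of common fixed neighbourhoods of integers, building a common uniform cellular decomposition, and realizing $f$ and $g$ inside a direct sum of ``diagonal'' copies of $F'$ indexed by equivalence classes of decorated atoms---is essentially the paper's proof. But there is a genuine gap at the very first step. You write that ``after shrinking $I$ we may assume that $I$ contains an integer $m$.'' Shrinking an interval cannot make it contain an integer, and the hypothesis does not guarantee that $I$ contains one, nor that $f$ fixes a neighbourhood of any integer. For instance, take $h \in H$ with equal nontrivial germs at $0$ and $1$ that pointwise fixes some subinterval of $(0,1)$; the corresponding element $\lambda(h) \in \mathcal{K} \leq G_\rho$ pointwise fixes an open interval but has slope $\neq 1$ at every integer. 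Since Definition \ref{def:atom} requires atoms to have integer endpoints, and conditions 3a/3b only transport the displacement function to integer translates (and their reflections under $\iota$, which again avoid neighbourhoods of integers), your set $S$ can be empty and the cellular decomposition never gets off the ground.

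The missing ingredient is the paper's opening move: by minimality of the action (Item 2 of Proposition \ref{prop:dynamics}) there exists $h \in G_\rho$ with $0 \in I \cdot h$, so that $f^h$ and $g^h$ pointwise fix a neighbourhood of $0$, and it suffices to prove the statement for the conjugates. A second, more minor point: when propagating fixed neighbourhoods of integers you should take the window radius to be $\max\{k_f, k_g\} + 1$ rather than $\max\{k_f, k_g\}$, because $\mathcal{W}(x,k)$ is centred at the half-integer of the unit interval containing $x$, and you need the displacement identity on \emph{both} sides of each integer in $S$; the paper flags this explicitly. With these two repairs, the remainder of your argument (syndeticity of $\mathcal{N}$, uniformity of the decomposition, the diagonal embeddings of $F'$ into $\Homeo^+(\mathbf{R})$, and the verification of Definition \ref{Krho} with a uniform constant) coincides with the paper's.
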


The statement for a single element is contained in the proof of \cite[Proposition 1.9]{HLNR}. We will prove Proposition \ref{Fprime} without appealing to this.

\begin{proof}
By Item $2$ of Proposition \ref{prop:dynamics}, the action of $G_\rho$ is minimal. Therefore there exists an element $h\in G_{\rho}$ such that $f^h$ and $g^h$ pointwise fix a neighborhood of $0$. It suffices to show the statement for $f^h, g^h$, so we may assume without loss of generality that $I$ is a neighbourhood of $0$.

Let $k := \max\{k_f, k_g\} + 1$ as in Definition \ref{characterisation}. Then for every $x, y \in \mathbf{R}$ such that $x - y \in \mathbf{Z}$ and $\mathcal{W}(x, k - 1) = \mathcal{W}(y, k - 1)$ it holds that
$$x - x \cdot f = y - y \cdot f, \quad x - x \cdot g = y - y \cdot g.$$
If instead $\mathcal{W}(x, k-1) = \mathcal{W}^{-1}(y, k-1)$, then
$$x - x \cdot f = y' \cdot f - y', \quad x - x \cdot g = y' \cdot g - y' \quad \text{where } y' = y \cdot \iota.$$
In particular, if $n \in \mathbf{Z}$ satisfies $\mathcal{W}(n, k) = \mathcal{W}(0, k)$, then both $f$ and $g$ fix pointwise a neighbourhood of $n$ (we took $k$ instead of $k - 1$ in order to ensure that this holds also for a left neighbourhood of $n$). We set $\mathcal{N}$ to be the set of such $n \in \mathbf{Z}$, and note that by the definition of quasi-periodic labelling, $\mathcal{N}$ is infinite and the set of distances between two consecutive elements of $\mathcal{N}$ is bounded.

This determines a decomposition of $\mathbf{R}$ into intervals $\{ I_\alpha \}_{\alpha \in P}$ with endpoints in $\mathcal{N}$. Since $f$ and $g$ fix a neighbourhood of each element of $\mathcal{N}$, each $I_\alpha$ is an atom for both $f$ and $g$, and so $\{ I_\alpha \}_{\alpha \in P}$ is a cellular decomposition of $\mathbf{R}$ which is suitable for both $f$ and $g$. Moreover, since two consecutive elements of $\mathcal{N}$ are at a bounded distance from each other, it follows that $\{ |I_\alpha| \}_{\alpha \in P}$ is bounded. In particular the integer $l := k + \max \{ |I_\alpha| : \alpha \in P \}$ is bounded, and the cellular decomposition $\{ I_\alpha \}_{\alpha \in P}$ is uniform: there are finitely many equivalence classes of decorated atoms in $\mathcal{T}_l(f, P) = \mathcal{T}_l(g, P)$ with respect to both elements $f$ and $g$.
We denote by $\zeta_1,\ldots,\zeta_m$ these equivalence classes, and let $f_{\zeta_1},\ldots,f_{\zeta_m}$ and $g_{\zeta_1},\ldots,g_{\zeta_m}$ be the corresponding cellular decompositions of $f$ and $g$. \\

For each $1\leq i\leq m$, set $L_i := |I_{\alpha}|$ where $(I_{\alpha},l)\in \zeta_i$: this is well-defined since $|I_{\alpha}|=|I_{\beta}|$ whenever $(I_{\alpha},l)$ and $(I_{\beta},l)$ are in the same equivalence class.
For each $1\leq i\leq m$, define the canonical isomorphism $$\phi_i:F'\to F_{[0,L_i]}'$$
where $F_{[0,L_i]}$ is the standard copy of $F$ supported on the interval $[0,L_i]$.

For each $1\leq i\leq m$, we have $$\{\mathcal{W}(I_{\alpha},l)\mid (I_{\alpha},l)\in \zeta_i\}=\{W_i,W^{-1}_i\}$$
for some words $W_1, \ldots, W_m$.
Define a map
$$\phi: \bigoplus_{1\leq i\leq m} F'\to \textup{Homeo}^+(\mathbf{R})$$
as follows. 
For $\alpha\in P$ and $1 \leq i \leq m$:
$$\phi(f_1,\ldots,f_m) \restriction I_{\alpha} \cong_T\phi_i(f_i) \qquad \text{ if }(I_{\alpha},l) \in \zeta_i\text{ and } \mathcal{W}(I_{\alpha},l)=W_i;$$
$$\phi(f_1,\ldots,f_m) \restriction I_{\alpha} \cong_{T^{or}}\phi_i(f_i) \qquad \text{ if }(I_{\alpha},l) \in \zeta_i\text{ and } \mathcal{W}(I_{\alpha},l)=W_i^{-1}.$$

This is an injective group homomorphism, and moreover the image of each element satisfies Definition \ref{Krho} with the uniform constant $l$. This implies that the image of $\phi$ is a subgroup $A < G_\rho$, which is isomorphic to a direct sum of $m$ copies of $F'$.
Moreover, for every $1 \leq i \leq m$, there is an element $f_i \in F'$ such that
$$\phi(id, \ldots, f_i, \ldots, id) = f_{\zeta_i}.$$
Therefore $f_{\zeta_i} \in A$ for every $i$, and thus $f = f_{\zeta_1} \cdots f_{\zeta_m} \in A$ as well.
Similarly $g_{\zeta_i} \in A$ for every $i$ and thus $g = g_{\zeta_1} \cdots g_{\zeta_m} \in A$.
This concludes the proof.
\end{proof}

\subsection{$2$-boundedly acyclic subgroups of $G_\rho$}

We are ready to identify certain $2$-boundedly acyclic subgroups of $G_\rho$.

\begin{proposition}\label{prop:stabbac}
Let $\Gamma \leq G_\rho$ and suppose that $\Gamma$ admits a global fixpoint $x\in \mathbf{R}$. Then $\Gamma$ is $2$-boundedly acyclic.
\end{proposition}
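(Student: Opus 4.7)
The plan is to identify a normal subgroup $\Gamma_0 \triangleleft \Gamma$ with amenable quotient such that $\Gamma_0$ is $2$-boundedly acyclic, and then conclude via Theorem \ref{thm:amenableext}. I define $\Gamma_0 \leq \Gamma$ to be the subgroup of elements that pointwise fix some open neighbourhood of $x$ (it is a subgroup because the intersection of two such neighbourhoods is again such a neighbourhood, and it is normal by a standard continuity argument using $g(x) = x$ for every $g \in \Gamma$). To check that $\Gamma / \Gamma_0$ is amenable, I would appeal to Theorem \ref{characterisation}: every $g \in G_{\rho}$ has a discrete singularity set and slopes in $2^{\mathbf{Z}}$, so each $g \in \Gamma$ is linear on sufficiently small one-sided neighbourhoods of $x$, with slopes $\alpha_L(g), \alpha_R(g) \in 2^{\mathbf{Z}}$. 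Taking $\log_2$ yields a homomorphism $\Gamma \to \mathbf{Z}^2$ (slopes multiply under composition, and this is where discreteness of singularities is essential) whose kernel is exactly $\Gamma_0$. Thus $\Gamma / \Gamma_0$ embeds in $\mathbf{Z}^2$ and is abelian, hence amenable.

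By Proposition \ref{prop:2dirun} it now suffices to show that every $2$-generated subgroup of $\Gamma_0$ is $2$-boundedly acyclic. Given $f, g \in \Gamma_0$, the intersection of the open neighbourhoods of $x$ that they respectively fix pointwise is itself an open interval $I \ni x$ pointwise fixed by both. Proposition \ref{Fprime} then provides a subgroup $A \leq G_{\rho}$ containing $\langle f, g \rangle$ and isomorphic to a finite direct sum $\bigoplus_{i=1}^{m} F'$. Such a finite direct sum embeds into $\textup{PL}^+([0,1])$ by picking $m$ pairwise disjoint compact subintervals of $[0,1]$ and realising each $F'$-factor on one of them; the factors commute by disjointness of supports. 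Theorem \ref{thm:plbac} then yields that $\langle f, g \rangle$, as a subgroup of $\textup{PL}^+([0,1])$, is $2$-boundedly acyclic.

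I do not expect a serious obstacle here: the heavy lifting is in the earlier structural result Proposition \ref{Fprime}, and in Theorem \ref{thm:plbac}. The only subtlety is that the germ map $\Gamma \to \mathbf{Z}^2$ is well-defined and has kernel exactly $\Gamma_0$, which is precisely where the discreteness of the singularity set from Definition \ref{Krho} is used. Everything else is an essentially formal assembly of Theorem \ref{thm:amenableext}, Proposition \ref{prop:2dirun}, Proposition \ref{Fprime}, and Theorem \ref{thm:plbac}.
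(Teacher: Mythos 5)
Your proposal is correct and follows essentially the same route as the paper: the germ homomorphism $\Gamma \to \mathbf{Z}^2$ at $x$ with kernel the pointwise stabilizer of a neighbourhood, reduction via Theorem \ref{thm:amenableext}, then Proposition \ref{prop:2dirun} combined with Proposition \ref{Fprime} and Theorem \ref{thm:plbac} applied to $2$-generated subgroups. The only addition is your explicit remark that a finite direct sum of copies of $F'$ embeds in $\textup{PL}^+([0,1])$, which the paper uses implicitly.
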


\begin{proof}
Since $x$ is a global fixpoint of $\Gamma$, and the left and right slope at $x$ are powers of $2$, we obtain a germ homomorphism $\Gamma \to \mathbf{Z}^2$, whose kernel is the subgroup $\Gamma_1$ consisting of elements $f\in \Gamma$ that pointwise fix some open neighbourhood $I_f$ of $x$. By Theorems \ref{thm:amenable} and \ref{thm:amenableext}, it suffices to show that $\Gamma_1$ is $2$-boundedly acyclic.

Every $2$-generated subgroup $\Delta \leq \Gamma_1$ pointwise fixes a nonempty open interval which contains $x$. Therefore by Proposition \ref{Fprime}, every such $\Delta$ is isomorphic to a subgroup of a finite direct sum of copies of $F'$. In particular, $\Delta$ is isomorphic to a subgroup of $\textup{PL}^+([0, 1])$ and so it is $2$-boundedly acyclic by Theorem \ref{thm:plbac}. Thus $\Gamma_1$ satisfies the hypotheses of Proposition \ref{prop:2dirun}, and so it is $2$-boundedly acyclic.
\end{proof}

\section{Proof of Theorem \ref{theorem:main}}

The following proposition is the key idea behind the proof of the main Theorem.

\begin{proposition}
\label{prop:main}
Let $\rho$ be a quasi-periodic labelling. Let $a_1, a_2, a_3 \in G_\rho$. Then there exist $h, g_1, g_2, g_3 \in G_\rho$ and non-empty sets $I_1 \subset I_2 \subset I_3 \subset \mathbf{R}$ with the following properties. Let $f_i := a_i^h$ for $i = 1, 2, 3$.
\begin{enumerate}
    \item Each of the groups $\langle g_1, g_2, g_3 \rangle$ and $\langle f_i, g_i \rangle : i = 1, 2, 3$ has a global fixpoint in $\mathbf{R}$.
    \item The element $f_i g_i$ fixes $I_i$ pointwise, for $i = 1, 2, 3$.
    \item $Supp(g_i) \subset I_{i+1}$ for $i = 1, 2$.
\end{enumerate}
\end{proposition}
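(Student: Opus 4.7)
The plan is to combine the dynamical features of $G_{\rho}$ (Proposition \ref{prop:dynamics}) with the pattern-based characterization of its elements (Theorem \ref{characterisation}) and a refinement of the special elements construction (Proposition \ref{Specialelements}) to produce the required $h$, $g_i$'s and $I_i$'s.

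First, I would select a conjugator $h \in G_{\rho}$ using the minimality and proximality of the action to position the conjugates $f_i := a_i^h$ in a form compatible with a subsequent choice of nested patterns. The key fact is that every element of $G_{\rho}$ has an infinite, bounded-gap set of fixed points---a consequence of Theorem \ref{characterisation} combined with Definition \ref{qplabelling}---which gives considerable flexibility in how the fixed-point structures of the $f_i$'s are distributed along $\mathbf{R}$. Next, I would select three patterns $W_1, W_2, W_3$ in $\rho$ such that $W_3$ is a central subword of $W_2$, which in turn is a central subword of $W_1$, and define $I_i$ as a union of integer intervals $[n,n+1]$ whose local pattern matches $W_i^{\pm 1}$. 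By the pattern-inclusion relation, finer patterns have fewer occurrences, giving $I_1 \subset I_2 \subset I_3$. Non-emptiness of each $I_i$, and the properness $I_3 \subsetneq \mathbf{R}$ needed later for global fixpoints, both follow from quasi-periodicity together with Lemma \ref{lem:notperiodic}.

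The technical heart is the construction of the $g_i$'s. For each $i$, I would construct $g_i \in G_{\rho}$ using a refinement of the special elements of Definition \ref{specialelementsdefinition} so that $g_i$ cancels $f_i$ on $I_i$, yielding condition (2), while having support inside $I_i$ itself, which by the nesting gives condition (3). The main difficulty is that generic $f_i$'s do not fix any open intervals, so the plain special elements---whose building block must lie in $F'$ and hence fix neighborhoods of $0$ and $1$---do not directly apply, both because the local inverse of $f_i\restriction[n,n+1]$ need not lie in $F'$ and because continuity of $g_i$ at integer boundaries of $I_i$ is not automatic. Overcoming this requires introducing new elements of $G_{\rho}$ whose local action absorbs the non-trivial behaviour of $f_i$ near these boundaries, coordinated with the choice of $h$ made in Step 1. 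This is precisely the ``new construction of elements in the group $G_\rho$ and refinement of the structural analysis of $G_{\rho}$ carried out in \cite{HLNR}'' mentioned in the introduction.

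Finally, I would verify condition (1) as follows. For $\langle g_1, g_2, g_3 \rangle$: any point in $\mathbf{R}\setminus I_3$ is fixed by every $g_i$, since $\mathrm{Supp}(g_i) \subset I_i \subset I_3$, and such points exist by the choice of $W_3$. For each $\langle f_i, g_i \rangle$: using the bounded-gap structure of $\mathrm{Fix}(f_i)$ together with a judicious choice of $W_i$, made to avoid the patterns at some fixed points of $f_i$, I would produce a common fixed point outside $\mathrm{Supp}(g_i)$. The main obstacle is therefore Step 3: reconciling the local cancellation demanded by (2) with the global uniformity constraints imposed by Theorem \ref{characterisation} when $f_i$ does not fix open intervals; the conjugator, pattern selection, and fixpoint verifications are preparatory once this construction is in place.
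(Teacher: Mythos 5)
Your outline assembles the right ingredients (proximality to choose $h$, special elements for the $g_i$, pattern recurrence for the fixed points and the nesting), but it has a genuine gap exactly where you place ``the technical heart'': the construction of the $g_i$ is never carried out. You correctly observe that the plain special elements $\lambda_\omega(\alpha)$ require $\alpha \in F'$ and therefore cannot invert $f_i \restriction [n,n+1]$ when $f_i$ does not fix neighbourhoods of the integer endpoints, and you then defer to ``new elements of $G_\rho$ whose local action absorbs the non-trivial behaviour of $f_i$ near these boundaries'' without constructing them. That is the whole proposition. The paper's resolution requires no elements beyond the special ones: since each $a_i$ fixes a point, one chooses nested compact intervals $J_1 \subset J_2 \subset J_3 \subset J_4$ with $J_i \cup J_i \cdot a_i \subset J_{i+1}$ and then uses proximality to find $h$ with $J_4 \cdot h \subset (0,1)$. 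After conjugation the entire relevant dynamics of $f_i = a_i^h$ is confined to $T_4 = J_4 \cdot h \subset (0,1)$, so the local inverse $f_i^{-1} \restriction T_i \cdot f_i$ extends to an element $\alpha_i \in F'$ supported on $T_{i+1}$, and $g_i := \lambda_\omega(\alpha_i)$ is an honest special element. The boundary problem you flag simply does not arise in this setup.

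Your choice of the sets $I_i$ also creates a problem that the paper avoids. You take $I_i$ to be a union of whole integer intervals $[n,n+1]$ whose pattern matches $W_i^{\pm 1}$, with $W_3 \subset W_2 \subset W_1$ nested patterns. But condition (2) then demands that $f_i g_i$ fix each such $[n,n+1]$ pointwise, and $f_i$ may move points of $[n,n+1]$ into an adjacent interval lying outside $I_{i+1} \supset Supp(g_i)$, where $g_i$ cannot cancel it; nothing in your setup prevents this. In the paper the nesting $I_1 \subset I_2 \subset I_3$ comes not from nested patterns but from a \emph{single} pattern $(W,l)$ together with the nested subintervals $T_1 \subset T_2 \subset T_3$ of $(0,1)$: one sets $I_i = (T_i + \mathcal{N}_1) \cup (T_i \cdot \iota + \mathcal{N}_2)$, and the containment $T_i \cdot f_i \subset T_{i+1}$ inside each unit cell is what makes (2) and (3) compatible. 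Finally, for the common fixed point of $\langle f_i, g_i\rangle$ your ``judicious choice of $W_i$'' needs to be made precise: the paper finds $m$ with $\mathcal{W}([m,m+1],k) = \mathcal{W}([0,1],k)$ but $\mathcal{W}([m,m+1],l) \neq W^{\pm 1}$ for some $l > k$ (using quasi-periodicity for the first and Lemma \ref{lem:notperiodic} for the second), so that $f_i$ replicates its fixed point at $m + x_i$ while $g_i$ is trivial on $[m,m+1]$. Your sketch gestures at this but, as written, the proposal does not constitute a proof.
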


Let us see how this proposition implies Theorem \ref{theorem:main}:

\begin{proof}[Proof of Theorem \ref{theorem:main}]
Let $\alpha \in \HH^2_b(G_\rho)$, and let $\omega$ be the unique homogeneous representative given by Theorem \ref{thm:bouarich}. This is associated to a central extension $E$ and a section $\sigma : G_\rho \to E$ such that $\omega(f, g) = \sigma(f)\sigma(g)\sigma(fg)^{-1}$. By Corollary \ref{cor:bouarich}, for every $2$-boundedly acyclic subgroup $\Gamma \leq G_\rho$, the restriction $\sigma|_H$ is a homomorphism. In particular, this holds when $\Gamma$ is abelian, by Theorem \ref{thm:amenable}, or has a global fixpoint, by Proposition \ref{prop:stabbac}.

We need to show that $\omega \equiv 0$, equivalently that $\sigma$ is a homomorphism. This means that for all $a, b \in G_\rho$ it holds $\sigma(a)\sigma(b)\sigma((ab)^{-1}) = \sigma(a)\sigma(b)\sigma(ab)^{-1} = id$. (Here we used that $\sigma(g^{-1}) = \sigma(g)^{-1}$, since $\sigma$ is a homomorphism when restricted to a cyclic subgroup.) So in other words we need to show that whenever $a_1, a_2, a_3 \in G_\rho$ satisfy $a_1 a_2 a_3 = id$, we also have $\sigma(a_1)\sigma(a_2)\sigma(a_3) = id$. \\

So let $a_1, a_2, a_3$ be as above, and let $h, g_1, g_2, g_3$ and $I_1, I_2, I_3$ be given by Proposition \ref{prop:main}. We similarly set $f_i = a_i^h$ for $i = 1, 2, 3$, and notice that $f_1 f_2 f_3 = id$, and that by Lemma \ref{lem:conj} it suffices to show that $\sigma(f_1)\sigma(f_2)\sigma(f_3) = id$.

\begin{claim}
It holds
$$\sigma(f_1)\sigma(f_2)\sigma(f_3) \cdot \sigma(g_3) \sigma(g_2) \sigma(g_1) = \sigma(f_1 g_1) \sigma(f_2 g_2) \sigma(f_3 g_3).$$
\end{claim}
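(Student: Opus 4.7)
The plan is to apply Corollary \ref{cor:bouarich} repeatedly to reduce the problem to a sequence of two elementary operations: ``collapses'' of the form $\sigma(f_i)\sigma(g_i) = \sigma(f_i g_i)$, and commutations of the $\sigma$-images of commuting elements. Rewriting the left-hand side via these moves should land on the right-hand side.

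My first observation is that, by Property (1) of Proposition \ref{prop:main}, each $\langle f_i, g_i \rangle$ has a global fixpoint, so Proposition \ref{prop:stabbac} makes it $2$-boundedly acyclic, and then Corollary \ref{cor:bouarich} identifies $\sigma|_{\langle f_i, g_i\rangle}$ as a homomorphism. In particular $\sigma(f_i g_i) = \sigma(f_i)\sigma(g_i)$ for each $i = 1, 2, 3$.

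Next, I would show that in $G_\rho$ the element $f_i g_i$ commutes with $g_j$ whenever $j < i$. The reason is that $f_i g_i$ fixes $I_i$ pointwise, while $Supp(g_j) \subset I_{j+1} \subseteq I_i$ by Property (3) together with $I_1 \subset I_2 \subset I_3$. The argument is then elementary: if a homeomorphism $\phi$ fixes a set $I$ pointwise and a homeomorphism $\psi$ has support contained in $I$, both $\phi$ and $\psi$ preserve $I$ and $I^c$ setwise (the former because it fixes $I$ pointwise and is bijective, the latter because it fixes $I^c$ pointwise and is bijective), and each acts as the identity on one of the two pieces, so a direct case-split on $x \in I$ versus $x \in I^c$ gives $\phi\psi = \psi\phi$. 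Consequently each subgroup $\langle f_i g_i, g_j\rangle$ (for $j<i$) is abelian, hence amenable, hence boundedly acyclic by Theorem \ref{thm:amenable}, so Corollary \ref{cor:bouarich} makes $\sigma$ a homomorphism there, and thus $\sigma(f_i g_i)$ commutes with $\sigma(g_j)$ in $E$. By the same argument applied to $\langle g_1, g_2, g_3 \rangle$ and Property (1), $\sigma$ is a homomorphism on this triple as well, though I do not expect to need this fact directly.

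Finally, I would carry out the rewriting. Starting from the left-hand side $\sigma(f_1)\sigma(f_2)\sigma(f_3)\sigma(g_3)\sigma(g_2)\sigma(g_1)$, the natural chain of moves is: collapse $\sigma(f_3)\sigma(g_3)$ to $\sigma(f_3 g_3)$; commute $\sigma(f_3 g_3)$ past $\sigma(g_2)$; collapse $\sigma(f_2)\sigma(g_2)$ to $\sigma(f_2 g_2)$; commute $\sigma(f_3 g_3)$ past $\sigma(g_1)$; commute $\sigma(f_2 g_2)$ past $\sigma(g_1)$; and finally collapse $\sigma(f_1)\sigma(g_1)$ to $\sigma(f_1 g_1)$. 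This lands on $\sigma(f_1 g_1)\sigma(f_2 g_2)\sigma(f_3 g_3)$, which is exactly the right-hand side. The main obstacle is conceptual rather than technical: one must recognise that Proposition \ref{prop:main} has been engineered to supply precisely the right combination of global fixpoints (giving boundedly acyclic subgroups $\langle f_i, g_i \rangle$ and the three required collapses) and of nested support/fixpoint conditions (giving the abelian pairs $\langle f_i g_i, g_j\rangle$ with $j<i$ and the three required commutations) to close the rewriting loop; once this is internalised, the verification itself is a short formal computation.
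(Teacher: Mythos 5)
Your proposal is correct and follows essentially the same route as the paper: collapse each $\sigma(f_i)\sigma(g_i)$ using the global fixpoint of $\langle f_i, g_i\rangle$ via Proposition \ref{prop:stabbac} and Corollary \ref{cor:bouarich}, and commute $\sigma(f_i g_i)$ past $\sigma(g_j)$ for $j<i$ using that $f_ig_i$ fixes $I_i$ pointwise while $Supp(g_j)\subset I_{j+1}\subseteq I_i$, so these pairs generate abelian (hence $2$-boundedly acyclic) subgroups. The only difference from the paper's computation is the order of two independent rewriting moves, which is immaterial.
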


\begin{proof}[Proof of Claim]
By Proposition \ref{prop:stabbac}, the group $\langle f_i, g_i \rangle$ is $2$-boundedly acyclic. Therefore:
$$\sigma(f_1)\sigma(f_2) \cdot \sigma(f_3) \sigma(g_3) \cdot \sigma(g_2) \sigma(g_1) = \sigma(f_1)\sigma(f_2) \cdot \sigma(f_3 g_3) \cdot \sigma(g_2) \sigma(g_1).$$
Now $f_3 g_3$ fixes $I_3$ pointwise, and $Supp(g_2)\subset I_3$, therefore $\langle f_3 g_3, g_2 \rangle$ is abelian and:
$$\sigma(f_1)\sigma(f_2) \cdot \sigma(f_3 g_3) \sigma(g_2) \cdot \sigma(g_1) = \sigma(f_1)\sigma(f_2) \cdot \sigma(g_2) \sigma(f_3 g_3) \cdot \sigma(g_1).$$
Similarly, $Supp(g_1)\subset I_2 \subset I_3$, therefore $\langle f_3 g_3, g_1 \rangle$ and $\langle f_2 g_2, g_1 \rangle$ are abelian. Thus:
$$\sigma(f_1)\sigma(f_2) \sigma(g_2) \cdot \sigma(f_3 g_3) \sigma(g_1) = \sigma(f_1)\sigma(f_2) \sigma(g_2) \cdot \sigma(g_1) \sigma(f_3 g_3) =$$
$$=\sigma(f_1) \cdot \sigma(f_2g_2) \sigma(g_1) \cdot \sigma(f_3 g_3) = \sigma(f_1) \cdot \sigma(g_1) \sigma(f_2 g_2) \cdot \sigma(f_3g_3) = \sigma(f_1 g_1)\sigma(f_2 g_2)\sigma(f_3 g_3),$$
which concludes the proof of the claim.
\end{proof}

Now the group $\langle f_1 g_1, f_2 g_2, f_3 g_3 \rangle$ fixes $I_1$ pointwise, which is a non-empty set. By Proposition \ref{prop:stabbac}, it is $2$-boundedly acyclic. Therefore
$$\sigma(f_1 g_1) \sigma(f_2 g_2) \sigma(f_3 g_3) = \sigma(f_1 g_1 \cdot f_2 g_2 \cdot f_3 g_3).$$
Using the same argument as in the proof of the claim in reverse, we obtain
$$f_1 g_1 \cdot f_2 g_2 \cdot f_3 g_3 = f_1 f_2 f_3 \cdot g_3 g_2 g_1 =  g_3 g_2 g_1.$$
Therefore
$$\sigma(f_1 g_1 \cdot f_2 g_2 \cdot f_3 g_3) = \sigma(g_3 g_2 g_1) = \sigma(g_3) \cdot \sigma(g_2) \cdot \sigma(g_1),$$
where we used that $\langle g_1, g_2, g_3 \rangle$ is $2$-boundedly acyclic, by Proposition \ref{prop:stabbac}.

We have thus shown that
$$\sigma(f_1)\sigma(f_2)\sigma(f_3) \cdot \sigma(g_3) \sigma(g_2) \sigma(g_1) = \sigma(g_3) \sigma(g_2) \sigma(g_1).$$
This implies that $\sigma(f_1)\sigma(f_2)\sigma(f_3) = id$, which is what we wanted to show.
\end{proof}
Corollary \ref{cor:integral} is then just a combination of Theorem \ref{theorem:main} with previously established facts.

\begin{proof}[Proof of Corollary \ref{cor:integral}]
The inclusion $\mathbf{Z} \subset \mathbf{R}$ induces a long exact sequence \cite[Proposition 1.1]{gersten} (see also \cite[Proposition 8.2.12]{monod}) that begins as follows:
$$0 \to \HH^1(G_\rho; \mathbf{R}/\mathbf{Z}) \to \HH^2_b(G_\rho; \mathbf{Z}) \to \HH^2_b(G_\rho; \mathbf{R}) \to \cdots$$
Since $G_\rho$ is perfect, we have $\HH^1(G_\rho; \mathbf{R}/\mathbf{Z}) \cong \Hom(G_\rho; \mathbf{R}/\mathbf{Z}) = 0$. Therefore Theorem \ref{theorem:main} implies that $\HH^2_b(G_\rho; \mathbf{Z}) = 0$.

The statement about circle actions is a direct consequence of Ghys's Theorem \cite{Ghys} (see also \cite[Proposition 10.20]{BC}).
\end{proof}
The rest of this section will be devoted to the proof of Proposition \ref{prop:main}.

\subsection{Proof of Proposition \ref{prop:main}}

Let $a_1, a_2, a_3 \in G_\rho$. By Item $1$ of Proposition \ref{prop:dynamics}, each $a_i$ fixes a point in $\mathbf{R}$. Therefore there exists a compact interval $J_1 \subset \mathbf{R}$ with endpoints in $\mathbf{Z}[\frac{1}{2}]$ such that each $a_i$ fixes a point inside $J_1$. Next, choose compact intervals $J_2, J_3, J_4$ with endpoints in $\mathbf{Z}[\frac{1}{2}]$ such that
$$J_i \cup J_i \cdot a_i \subset J_{i+1} : 1 \leq i \leq 3.$$

By Item $3$ of Proposition \ref{prop:dynamics} and the definition of a proximal action, there exists an element $h \in G_\rho$ such that $J_4 \cdot h \subset (0, 1)$. We set $T_i := J_i \cdot h$, and note that $T_i \subset (0, 1)$ has endpoints in $\mathbf{Z}[\frac{1}{2}]$, for $i = 1, \ldots, 4$, since $G_\rho$ preserves $\mathbf{Z}[\frac{1}{2}]$.

As in the statement of Proposition \ref{prop:main}, we set $f_i := a_i^h$. By Definition \ref{Krho} and Theorem \ref{characterisation}, there exists $k \in \mathbf{N}$ such that whenever $x\in [0,1], y \in [n,n+1]$ satisfy $x - y\in \mathbf{Z}$ and $\mathcal{W}([0, 1], k) = \mathcal{W}([n, n+1], k)$, it holds that:
$$x - x \cdot f_i = y - y \cdot f_i : i = 1, 2, 3.$$
Fix $m \neq 0$ such that $\mathcal{W}([0,1], k) = \mathcal{W}([m,m+1], k)$: such an $m$ must exist because $\rho$ is quasi-periodic. However $\rho$ is not periodic by Lemma \ref{lem:notperiodic}, therefore there exists $l > k$ such that $\mathcal{W}([0,1], l) \neq \mathcal{W}([m, m+1], l)$. Moreover, since $\mathcal{W}([0,1], k) = \mathcal{W}([m,m+1], k)$, we also have $\mathcal{W}([0,1], l) \neq \mathcal{W}^{-1}([m,m+1], l)$.

We set $W := \mathcal{W}([0,1], l), \omega := (W, l) \in \Omega$ and $$\mathcal{N}_1 := \{ n \in \mathbf{Z} : \mathcal{W}([n,n+1], l) = W \}\qquad \mathcal{N}_2 := \{ n \in \mathbf{Z} : \mathcal{W}([n,n+1], l) = W^{-1} \}.$$
Note that by construction $m \notin \mathcal{N}_1\cup \mathcal{N}_2$. Finally, we set $$I_i := (T_i + \mathcal{N}_1)\bigcup (T_i\cdot \iota+\mathcal{N}_2).$$ for $i = 1, \ldots, 4$. Then these sets satisfy $\emptyset \subset I_1 \subset I_2 \subset I_3 \subset I_4 \subset \mathbf{R}$, and all of these inclusions are strict. \\

We will now use $\omega$ to define the elements $g_i$: these will be special elements as in Definition \ref{Specialelements}. For $i = 1, 2, 3$, let $\alpha_i$ be an element of $F$ supported on $T_{i+1} \subset (0, 1)$ such that $\alpha_i \restriction T_i\cdot f_i = f_i^{-1} \restriction T_i \cdot f_i$. This is possible since $T_i \cdot f_i \subset T_{i+1} \subset (0, 1)$ for all $i = 1, 2, 3$, and the restriction $f_i^{-1} \restriction T_i \cdot f_i$ is piecewise linear with breakpoints in $\mathbf{Z}[\frac{1}{2}]$ and slopes (when they exist) equal to powers of $2$. Note that $\alpha_i \in F'$, and so we can define $g_i := \lambda_\omega(\alpha_i)$. \\

Now that we have defined all objects involved, we will prove that each item of Proposition \ref{prop:main} holds.

\begin{claim}
\label{claim:support}

For $i = 1, 2, 3$, we have $Supp(g_i) \subset I_{i+1}$. In particular, $\langle g_1, g_2, g_3 \rangle$ has a global fixpoint.
\end{claim}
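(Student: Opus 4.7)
The plan is to unpack $g_i = \lambda_\omega(\alpha_i)$ interval by interval on the partition $\mathbf{R} = \bigcup_{n \in \mathbf{Z}}[n, n+1]$. By Definition \ref{specialelementsdefinition}, $g_i \restriction [n, n+1] = id$ whenever $\mathcal{W}([n, n+1], l) \neq W^{\pm 1}$, that is, whenever $n \notin \mathcal{N}_1 \cup \mathcal{N}_2$, so these intervals contribute nothing to the support. For $n \in \mathcal{N}_1 \cup \mathcal{N}_2$ we instead have $g_i \restriction [n, n+1] = \lambda(\alpha_i) \restriction [n, n+1]$. Since $\alpha_i \in F' \subset H$ can be expressed as a word in $\nu_1, \nu_2, \nu_3$, and each $\zeta_j \restriction [n,n+1]$ is $T$-conjugate or $T^{or}$-conjugate to $\nu_j$ according to whether $\rho(n+1/2) = b$ or $\rho(n+1/2) = b^{-1}$, the restriction $\lambda(\alpha_i) \restriction [n, n+1]$ is a correspondingly conjugated copy of $\alpha_i$.

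The key step is to match $\mathcal{N}_1, \mathcal{N}_2$ with the two orientations. The middle letter of $\mathcal{W}([n,n+1], l)$ is always $\rho(n+1/2)$, and the middle letter of $W^{-1}$ is the inverse of the middle letter of $W = \mathcal{W}([0,1], l)$. After relabelling $b \leftrightarrow b^{-1}$ if necessary, we may assume $\rho(1/2) = b$. Then for $n \in \mathcal{N}_1$ we have $\rho(n+1/2) = b$, so the conjugation on $[n, n+1]$ is orientation-preserving, while for $n \in \mathcal{N}_2$ we have $\rho(n+1/2) = b^{-1}$, giving orientation-reversing conjugation. Combined with $Supp(\alpha_i) \subset T_{i+1} \subset (0, 1)$, this yields $Supp(g_i \restriction [n,n+1]) \subset T_{i+1} + n$ for $n \in \mathcal{N}_1$ and $Supp(g_i \restriction [n,n+1]) \subset T_{i+1} \cdot \iota + n$ for $n \in \mathcal{N}_2$. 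Taking the union over all $n$ gives $Supp(g_i) \subset (T_{i+1} + \mathcal{N}_1) \cup (T_{i+1} \cdot \iota + \mathcal{N}_2) = I_{i+1}$, which is the first assertion.

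For the global fixpoint, recall that $l$ was chosen precisely so that $m \notin \mathcal{N}_1 \cup \mathcal{N}_2$. Therefore $g_i \restriction [m, m+1] = id$ for each $i = 1, 2, 3$, and every point of $[m, m+1]$ is a common fixpoint of $g_1, g_2, g_3$; hence $\langle g_1, g_2, g_3 \rangle$ has a global fixpoint.

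The main obstacle here is really a bookkeeping one: carefully tracking the dictionary between the alphabet $\{b, b^{-1}\}$ and the two orientations appearing in the definition of $\zeta_j$, and confirming that the sets $\mathcal{N}_1, \mathcal{N}_2$ (defined via $W$ and $W^{-1}$) align with the appropriate orientation so that the two summands in $I_{i+1}$ capture precisely the contributions of the two cases. Once this alignment is fixed, the proof is a direct unpacking of Definitions \ref{SimpleGroup} and \ref{specialelementsdefinition}.
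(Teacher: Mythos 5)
Your argument is correct and is essentially the paper's own proof: both unpack Definition \ref{specialelementsdefinition} interval by interval, noting that $g_i$ is trivial on $[n,n+1]$ unless $n \in \mathcal{N}_1 \cup \mathcal{N}_2$, in which case its support lies in the translate $T_{i+1}+n$ or the flipped translate $T_{i+1}\cdot\iota+n$ of $Supp(\alpha_i)$, and both obtain the global fixpoint from an interval missed by $I_4$ (the paper uses $\mathbf{R}\setminus I_4 \neq \emptyset$; your $[m,m+1]$ is a concrete such interval). The one step to treat with care is the reduction to $\rho(1/2)=b$: swapping $b \leftrightarrow b^{-1}$ in the labelling changes the generators $\zeta_j$, hence the group $G_\rho$ and all the data already fixed in the proof of Proposition \ref{prop:main}, so it is not available as a normalization at this stage. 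The robust formulation is that for $n\in\mathcal{N}_1$ the full words $\mathcal{W}([n,n+1],l)$ and $\mathcal{W}([0,1],l)$ agree, so $g_i\restriction [n,n+1]$ is the translate by $n$ of $\lambda(\alpha_i)\restriction [0,1]$, while for $n\in\mathcal{N}_2$ it is the flipped translate; when $\rho(1/2)=b^{-1}$ this places the support in $(T_{i+1}\cdot\iota+\mathcal{N}_1)\cup(T_{i+1}+\mathcal{N}_2)$ rather than in $I_{i+1}$ as literally defined --- an orientation convention the paper also leaves implicit, and a harmless one, since the rest of the argument only uses the nesting $I_1\subset I_2\subset I_3$ and the properties stated in Proposition \ref{prop:main}.
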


\begin{proof}
By construction, $\alpha_i \in F$ is supported on $T_{i+1} \subset (0, 1)$. Let $x \in \mathbf{R}$ be such that $x \cdot g_i \neq x$, and let $n \in \mathbf{Z}$ be such that $x \in [n, n+1]$. Since $g_i = \lambda_\omega(\alpha_i)$, it holds that either: $\mathcal{W}([n, n+1], l) = W$ or $\mathcal{W}([n, n+1], l) = W^{- 1}$. Then by definition, the following holds. In the former case, $n \in \mathcal{N}_1$ and $x \in T_{i+1} + n \subset I_{i+1}$. In the latter case, $n \in \mathcal{N}_2$ and $x \in T_{i+1}\cdot \iota + n \subset I_{i+1}$. This shows that $Supp(g_i) \subset I_{i+1}$.

Now $\langle g_1, g_2, g_3 \rangle$ is supported on $I_4$, and so every point in $\mathbf{R} \setminus I_4 \neq \emptyset$ is a global fixpoint. (Note that $\mathbf{Z}$ will, in particular, be fixed pointwise.)
\end{proof}

\begin{claim}
\label{claim:fixpoint}

For $i = 1, 2, 3$, the group $\langle f_i, g_i \rangle$ has a global fixpoint, and $f_i g_i$ fixes $I_i$ pointwise.
\end{claim}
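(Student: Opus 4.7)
The plan is to analyze the actions of $f_i$ and $g_i$ on each integer interval $[n, n+1]$ separately, exploiting the characterization $G_\rho = K_\rho$ from Theorem \ref{characterisation} for $f_i$ and the explicit description of $g_i = \lambda_\omega(\alpha_i)$ as a special element from Definition \ref{specialelementsdefinition}.

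The first step is to establish the following structural picture. For $n \in \mathcal{N}_1$, the equality $\mathcal{W}([n, n+1], l) = W$ combined with $l > k$ yields $\mathcal{W}([n, n+1], k) = \mathcal{W}([0, 1], k)$, so condition 3.a of Definition \ref{Krho} applied to $f_i$ forces $f_i \restriction [n, n+1]$ to be the translated copy of $f_i \restriction [0, 1]$. A comparison of the central letters of $W$ and $\mathcal{W}([n, n+1], l)$ gives $\rho(n + 1/2) = \rho(1/2)$, which by the definition of $\lambda$ and $\lambda_\omega$ implies that $g_i \restriction [n, n+1]$ is likewise the translated copy of $g_i \restriction [0, 1]$. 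For $n \in \mathcal{N}_2$, analogously $\mathcal{W}([n, n+1], k) = \mathcal{W}^{-1}([0, 1], k)$, and condition 3.b together with the labelling comparison shows that both $f_i$ and $g_i$ are flip-translated copies of their $[0, 1]$-restrictions on $[n, n+1]$. For $n \notin \mathcal{N}_1 \cup \mathcal{N}_2$, $g_i \restriction [n, n+1] = id$ by definition of the special element.

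Given this picture, claim (2) follows as follows. The choice of $\alpha_i \in F'$ ensures that the product $f_i g_i$ restricted to $[0, 1]$ (which lies in $\mathcal{N}_1$) fixes $T_i$ pointwise: the relation $\alpha_i \restriction T_i \cdot f_i = f_i^{-1} \restriction T_i \cdot f_i$ combined with the support condition $Supp(\alpha_i) \subset T_{i+1}$ gives $x \cdot f_i \cdot \alpha_i = x$ for all $x \in T_i$. Transporting this identity via the translations on $\mathcal{N}_1$-intervals and flip-translations on $\mathcal{N}_2$-intervals, $f_i g_i$ fixes $T_i + n$ pointwise for $n \in \mathcal{N}_1$ and $T_i \cdot \iota + n$ pointwise for $n \in \mathcal{N}_2$; the union of these fixed sets is exactly $I_i$.

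For claim (1), I would use the integer $m \neq 0$ already constructed, which satisfies $\mathcal{W}([m, m+1], k) = \mathcal{W}([0, 1], k)$ but $\mathcal{W}([m, m+1], l) \notin \{W, W^{-1}\}$. Since $m \notin \mathcal{N}_1 \cup \mathcal{N}_2$, the special element $g_i$ is trivial on $[m, m+1]$. By condition 3.a applied to $f_i$, the restriction $f_i \restriction [m, m+1]$ is a translate of $f_i \restriction [0, 1]$; since $a_i$ has a fixpoint in $J_1$, the conjugate $f_i = a_i^h$ fixes some $p \in T_1 = J_1 \cdot h \subset (0, 1)$, and then $p + m \in T_1 + m \subset [m, m+1]$ is a point fixed by both $f_i$ and $g_i$, giving the required common fixpoint of $\langle f_i, g_i \rangle$. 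The principal technical point requiring care is the orientational bookkeeping in the structural step: one must verify that the flip-translated picture on $\mathcal{N}_2$-intervals transports the fixpoint set $T_i$ on $[0, 1]$ precisely to $T_i \cdot \iota + n$, matching the definition of $I_i$. Modulo this routine verification, everything reduces to the local identities on $[0, 1]$ ensured by the definition of $\alpha_i$.
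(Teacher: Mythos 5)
Your proposal is correct and follows essentially the same route as the paper: the common fixpoint of $\langle f_i, g_i\rangle$ is located at $x_i + m$ using the interval $[m,m+1]$ where $g_i$ is trivial but the $k$-level word matches, the defining relation $\alpha_i \restriction T_i \cdot f_i = f_i^{-1}\restriction T_i\cdot f_i$ gives that $f_ig_i$ fixes $T_i$, and conditions 3.a/3.b together with the special-element structure of $g_i$ transport this to $T_i + \mathcal{N}_1$ and $T_i\cdot\iota + \mathcal{N}_2$, i.e.\ to all of $I_i$. Your ``structural picture'' step merely spells out more explicitly the transport argument that the paper invokes in one sentence, so there is no substantive difference.
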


\begin{proof}
Fix $i \in \{1, 2, 3\}$. Recall that we fixed an element $m \in \mathbf{Z}$ such that $m \notin \mathcal{N}_1\cup \mathcal{N}_2$ (that is $\mathcal{W}([m,m+1], l) \neq W^{\pm 1}$), but $\mathcal{W}([m, m+1], k) = \mathcal{W}([0, 1], k)$. Moreover, recall that $f_i$ has a fixpoint $x_i$ inside $T_1$, because $a_i$ has a fixpoint inside $J_1$. It follows from the definition of $k$ that $m + x_i \in [m, m+1]$ is a fixpoint of $f$. Moreover, from Claim \ref{claim:support}, it follows that $g_i$ fixes $[m, m+1]$ pointwise. Therefore $m + x_i$ is a global fixpoint of $\langle f_i, g_i \rangle$.

By construction, $$f_i^{-1} \restriction T_i \cdot f_i = \alpha_i \restriction T_i \cdot f_i = g_i \restriction T_i \cdot f_i$$ Therefore, $f_i g_i$ fixes $T_i$ pointwise.

Now let $x\in I_i$. We wish to show that $x\cdot f_ig_i=x$. Let $n\in \mathbf{Z}$ be such that $x\in [n,n+1)$.
Then either $n\in \mathcal{N}_1$ and $x\in T_i+n$, or $n\in \mathcal{N}_2$ and $x\in T_i\cdot \iota+n$.

By definition of $l$, the fact that $l>k$, and construction of the $g_i$, we have the following. If $x$ is a fixpoint of $f_i g_i$ and $y \in \mathbf{R}$ satisfies $x - y = n \in \mathcal{N}_1$, then $y$ is also a fixpoint of $f_i g_i$. Similarly, if $x$ is a fixpoint of $f_i g_i$ and $y \in \mathbf{R}$ satisfies $x - y\cdot \iota = n \in \mathcal{N}_2$, then $y$ is also a fixpoint of $f_i g_i$. Our claim follows.
\end{proof}

\begin{proof}[Proof of Proposition \ref{prop:main}]
Item $1$ follows from Claims \ref{claim:support} and \ref{claim:fixpoint}, Item $2$ from Claim \ref{claim:fixpoint} and Item $3$ from Claim \ref{claim:support}.
\end{proof}

\bibliographystyle{abbrv}
\bibliography{navas}

\noindent{\textsc{Department of Mathematics, ETH Z\"urich, Switzerland}}

\noindent{\textit{E-mail address:} \texttt{francesco.fournier@math.ethz.ch}} \\

\noindent{\textsc{Department of Mathematics, Universit\"at Wien, Austria}}

\noindent{\textit{E-mail address:} \texttt{yash.lodha@univie.ac.at}}

\end{document}